\newcommand{\ep}{\varepsilon}
\renewcommand{\le}{\leq}
\renewcommand{\ge}{\geq}
\newcommand{\BV}{\operatorname{BV}}
\newcommand{\1}{{\bf 1}}
\newcommand{\Tan}{{\rm Tan}}
\newcommand{\restrict}{\llcorner}
\newcommand{\UPR}{{\mathcal U}_{\rm{PR}}}
\DeclareMathOperator{\INt}{int}
\DeclareMathOperator{\Span}{span}
\DeclareMathOperator{\Div}{div}
\DeclareMathOperator{\Star}{star}
\newtheorem{theorem}{Theorem}[section]
\newtheorem{lemma}[theorem]{Lemma}
\newtheorem{proposition}[theorem]{Proposition}
\newtheorem{corollary}[theorem]{Corollary}
\newtheorem{example}[theorem]{Example}
\newtheorem{remark}[theorem]{Remark}
\newcommand{\bbE}{{\mathbb E}}
\newcommand{\bbN}{{\mathbb N}}
\newcommand{\R}{{\mathbb R}}
\newcommand{\cF}{{\mathcal F}}
\newcommand{\cH}{{\mathcal H}}
\newcommand{\cB}{{\mathcal B}}
\newcommand{\cR}{{\mathcal R}}
\newcommand{\cM}{{\mathcal M}}
\newcommand{\conv}{{\mathrm{conv}}}
\newcommand{\ova}{\overline{P}}
\newcommand{\loc}{{\rm loc}}
\begin{document}

\title{Dilation volumes of sets of finite perimeter} 
\author{Markus Kiderlen} 
\address{Department of Mathematical Sciences, University of Aarhus, Ny
  Munkegade 118, DK-8000 Aarhus C, Denmark}
\email{kiderlen@math.au.dk}
\author{Jan Rataj}
\address{Charles University, Faculty of Mathematics and Physics, Sokolovsk\'a 83, 18675 Praha 8, Czech Republic}
\email{rataj@karlin.mff.cuni.cz}
\thanks{The first author was supported by the Villum Foundation in the
framework of the VKR Centre of Excellence {\bf CSGB} 
(Centre for Stochastic Geometry and Advanced Bioimaging). The second author acknowledges support from the Czech Science Foundation, project No.\ P201/15-08218S}

\begin{abstract}
This paper analyzes the first order behavior (that is, the right sided
derivative) of the volume of the
dilation $A\oplus tQ$ 
as $t$ converges to zero. Here $A$ and $Q$ are subsets of 
$n$-dimensional Euclidean space, $A$ has  finite perimeter and $Q$ is 
finite. If $Q$  consists of two points only, $x$ and $x+u$, say,
this derivative coincides up to sign with the directional derivative of the
covariogram of $A$ in direction $u$. By  known results for the
covariogram,   this
derivative can therefore be expressed by the cosine transform of the surface
area measure of $A$. We extend this result to finite sets $Q$
 and use it to determine the derivative of the 
contact distribution function with finite structuring element of a stationary random set at
zero.
 The proofs are based on approximation 
 of the characteristic function of $A$ by smooth functions of
bounded variation and showing  corresponding  formulas for them. 
\end{abstract}

\keywords{bounded variation; contact distribution function; dilation volume; directional variation; sets of finite perimeter; stationary random set; surface area measure} 

\subjclass[2000]{26B30,28A75,60D05} 

\maketitle

\section{Introduction}

Assume that $A\subset\R^n$ has regular boundary in the sense that the $(n-1)$-dimensional Hausdorff measure of its boundary, $\cH^{n-1}(\partial A)$, is finite and that for $\cH^{n-1}$ almost all $a\in\partial A$, there exists a unique outer unit normal vector $\nu_A(a)\in S^{n-1}$. This is the case e.g.\ if $A$ is a topologically regular convex or polyconvex set, $n$-dimensional compact Lipschitz manifold with boundary or a ``full-dimensional $\UPR$ set'' (\cite{rata04}). Then, the surface area measure of $A$ is defined naturally as
\[
S_{n-1}(A;\cdot)=\cH^{n-1}\{a\in\partial A:\, \nu_A(a)\in\cdot\}.
\]
The surface area measure is an important quantity in stochastic geometry and its estimation is a frequent task. Various integral formulas are used in this context. It is well-known that the intersection density of $\partial A$ with lines of direction $u\in S^{n-1}$ is
$$\int_{S^{n-1}} |u\cdot v|\, S_{n-1}(A;dv),$$
and that these integrals (called cosine transform) determine only the
symmetrized form of the surface area measure,
$S_{n-1}(A;\cdot)+S_{n-1}(-A;\cdot)$. The cosine transform appears
also in the directional derivative of the 
covariogram of $A$,
\[
C(A,y)=\lambda_n(A\cap (A+y)),\quad y\in\R^n,
\]
($\lambda_n$ is Lebesgue-measure in $\R^n$), as 
\begin{equation} \label{covariogram}
\lim_{r\to 0+} {\frac{C(A,ru)-C(A,0)}{r}}=-\frac 12 \int_{S^{n-1}} |u\cdot v|\, S_{n-1}(A;dv),
\end{equation}
when $u\in S^{n-1}$ and $A$ has finite volume. This 
was shown by Matheron \cite{mat:65} for convex bodies and  extended
considerably by Galerne \cite{Gal:10}. 

Note that the covariogram can be expressed by means of dilation volumes with two-point test sets, namely
$$C(A,y)=2\lambda_n(A)-\lambda_n(A\oplus\{ 0,y\}).$$
A natural extension is to consider the dilation volume
$\lambda_n(A\oplus Q)$
for  a general compact test set $Q\subset\R^n$. 
Generalizing  \eqref{covariogram}, we have 
\begin{equation}  \label{ext-cov}
\lim_{r\to 0+}\frac{\lambda_n(A\oplus rQ)-\lambda_n(A)}{r}=\int_{S^{n-1}} h(Q,v)\, S_{n-1}(A;dv),
\end{equation}
where $h(Q,\cdot)$ is the support function of $\conv \,Q$. This 
 was shown in \cite[Corollary~2]{KR06} under the assumption that $A$
 is a compact gentle set. Besides a technical regularity
 condition this means  that for $\cH^{n-1}$-almost all  points 
 $a\in \partial A$ there are non-degenerate osculating balls containing $a$,
 one completely contained in  $A$ and the other in the closure of
 $A^C$. While the right hand side of \eqref{covariogram} (known for all $u$) 
determines only the
symmetrized form of the surface area measure, the right hand side of
 \eqref{ext-cov} determines $S_{n-1}(A;\cdot)$ itself, when the
 integrals are known for all sets $Q$ that are congruent to a fixed
 triangle having at least one angle that is an irrational multiple of
 $\pi$. This was shown by Schneider \cite{schn74b}; see also 
\cite[p.~283 and (5.1.18)]{sch93}. In particular, for the
 determination of $S_{n-1}(A;\cdot)$ it is enough to know the right
 hand side of \eqref{ext-cov} for all 
 three-point test sets $Q$; cf.~\cite{rata04} for a related result. 
 \medskip

Although the class of gentle sets is reasonably large (it contains for
 instance all topologically regular polyconvex sets) 
 this condition for the derivation of \eqref{ext-cov} seems
to be rather artificial and its purpose is to make the proofs 
work. A different approach is based on the theory of 
sets with finite perimeter which are, by definition, sets
$A\subset\R^n$ whose indicator  function $\1_A$ has
distributional derivative representable 
as a Radon measure $D\1_A$. (In other words, $\1_A$ has bounded variation.) 
The notion of sets with finite perimeter goes back to Caccioppoli
 \cite{cac:52} and 
De Giorgi \cite{gio:54,gio:55,gio:58}. 
We note that  (poly)convex sets, compact $\UPR$-sets as
 well as compact gentle sets, or compact Lipschitz domains are sets of finite perimeter, simply as any set whose boundary has
finite $\cH^{n-1}$-measure has also finite perimeter.

In the following we describe, how 
the notion of surface area measure can be extended to sets of finite
perimeter. 
The \emph{essential  boundary} $\partial^* A$ of a set $A$ is the set of
points in $\R^n$ that are neither Lebesgue density points of $A$ nor
of its complement. If $A$ is a set of finite perimeter, then the variation (scalar) measure $|D\1_A|$ can be written as a restriction of the $(n-1)$-dimensional Hausdorff measure $\cH^{n-1}$ in the form 
\begin{align} \label{D1_A_Interpretation}
|D\1_A|=\cH^{n-1}\restrict (\partial^* A),
\end{align}
 \cite[(3.63)]{AFP00},
and the perimeter $P(A)=|D\1_A|(\R^n)$ equals
$\cH^{n-1}(\partial^* A)$. In the case where 
 $A$ has  Lipschitz boundary, we have
$\partial A=\partial^* A$ and $P(A)$ coincides with the usual surface area of
$A$. 

For a general set $A$ with finite perimeter, the distributional derivative $D\1_A$ can be decomposed as
$$D\1_A=\Delta_{\1_A}\,|D\1_A|;$$
see \eqref{polar}, below. 
The density $\Delta_{\1_A}$ is an $S^{n-1}$-valued function defined
$\cH^{n-1}$-almost everywhere on $\partial^*A$ and can be interpreted
as a generalized inner unit normal vector field to $A$. (In fact there
exists a subset of $\partial^*A$ of full $\cH^{n-1}$ measure called
{\it reduced boundary} and a representative $\nu_A$ of $-\Delta_{1_A}$
defined there such that the half-space $\{y:\, y\cdot\nu_A(a)\leq 0\}$
coincides with the approximate tangent cone of $A$ at $a$ for any $a$
from the reduced boundary, see \cite[\S3.5]{AFP00}.) Thus, it is 
natural to define the {\it generalized surface area measure} of a set $A$ with finite perimeter as
\begin{equation}  \label{def_SAM}
S_{n-1}^*(A;\cdot)=\cH^{n-1}\{a\in\partial^*A:\, -\Delta_{\1_A}(a)\in\cdot\}.
\end{equation}
Clearly, $S_{n-1}^*(A;\cdot)$ coincides with $S_{n-1}(A;\cdot)$ if $A$ has Lipschitz boundary.
\medskip

Sets with finite perimeter have already appeared in the context of stochastic
geometry. 
Villa 
\cite{
Vil:10}
considered the (outer) Minkowski content 
and the spherical contact
distribution function of inhomogeneous Boolean models with grains that
have finite perimeter. The second author  considered  in \cite{Rataj:15} random sets of finite perimeter and  established, among other things, a Crofton formula for these. Galerne and Lachi\`eze-Rey \cite{Gal:LachRey:15} developed a theory of random measurable (not necessarily closed) sets and discussed the covariogram realizability problem in this framework. Their paper is based on an earlier one by
Galerne \cite{Gal:10}, who showed an extension of the formula \eqref{covariogram} for sets with finite volume and finite perimeter, namely 
\begin{equation} \label{covariogram_e}
\lim_{r\to 0+} {\frac{C(A,ru)-C(A,0)}{r}}=-\frac 12 \int_{S^{n-1}} |u\cdot v|\, S^*_{n-1}(A;dv),
\end{equation}
and applied it to random sets. 

Our main result is an analogous extension of \eqref{ext-cov} for the case of finite sets $Q$:

\begin{theorem}\label{thm:main}
Assume that $A\subset\R^n$ has finite perimeter. If
 $\emptyset \ne Q\subset\R^n$ is finite then 
\begin{align}\label{eq:main1}
\lim_{r\to 0+}\frac {\lambda_n((A\oplus rQ)\setminus
  A)}r=\int h(Q,v)^+ \,S_{n-1}^*(A;dv).
\end{align}
If, in addition, $A$ has bounded volume then also
\begin{align}\label{eq:main2}
\lim_{r\to 0+}\frac {\lambda_n(A\oplus rQ)-\lambda_n(A)}r=
  \int h(Q,v)\, S_{n-1}^*(A;dv).
\end{align}
\end{theorem}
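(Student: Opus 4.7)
The approach is to mollify $\1_A$, establish an analogue of \eqref{eq:main1} for smooth functions of bounded variation, and pass to the limit via Reshetnyak's continuity theorem on the right and a blow-up argument at $\cH^{n-1}$-a.e.\ $a\in\partial^*A$ on the left. First, \eqref{eq:main2} reduces to \eqref{eq:main1}: for bounded $A$ with finite perimeter the divergence theorem yields $\int v\, S^*_{n-1}(A;dv)=0$, so translating $Q$ by $q_0\in Q$ changes neither side of \eqref{eq:main2}; choosing $Q':=Q-q_0$ gives $0\in Q'$, so $A\subseteq A\oplus rQ'$, $h(Q',\cdot)\ge 0$, and \eqref{eq:main2} for $Q$ becomes \eqref{eq:main1} for $Q'$.

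\textbf{Smooth identity and mollification.} Writing $M_Q^r u(x):=\max_{q\in Q}u(x-rq)$, for $u\in C^1(\R^n)$ with $\nabla u\in L^1$ pointwise Taylor gives $r^{-1}(M_Q^r u-u)\to h(Q,-\nabla u)$, and a Vitali-type argument (using $|u(x-rq)-u(x)|\le r|q|\int_0^1|\nabla u(x-srq)|\,ds$ together with absolute continuity of $\int|\nabla u|$) yields
\[
\lim_{r\to 0+}\frac{1}{r}\int[M_Q^r u(x)-u(x)]^+\,dx=\int h(Q,-\nabla u(x))^+\,dx.
\]
Applying this to $u_\epsilon:=\rho_\epsilon*\1_A$ and using Jensen's inequality for the convex operation $\max$ (which gives $M_Q^r u_\epsilon\le \rho_\epsilon*\1_{A\oplus rQ}$), one obtains the sandwich
\[
0\le \int[M_Q^r u_\epsilon-u_\epsilon]^+\,dx\le \lambda_n((A\oplus rQ)\setminus A)
\]
for every $\epsilon,r>0$. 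For fixed $r$ the BV translation estimate $\|\rho_\epsilon*\1_A-\1_A\|_{L^1}\le\epsilon P(A)$ and the $1$-Lipschitzness of $\max$ imply $L^1$-convergence $[M_Q^r u_\epsilon-u_\epsilon]^+\to \1_{(A\oplus rQ)\setminus A}$ as $\epsilon\to 0$. Meanwhile, strict BV-convergence $|Du_\epsilon|(\R^n)\to P(A)$ combined with Reshetnyak's continuity theorem applied to the continuous positively $1$-homogeneous function $w\mapsto h(Q,-w)^+$ yields $\int h(Q,-\nabla u_\epsilon)^+\,dx\to\int h(Q,v)^+\,S^*_{n-1}(A;dv)=:I$.

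\textbf{Main obstacle.} The key difficulty is exchanging the limits $r\to 0+$ and $\epsilon\to 0$. The sandwich immediately yields the \emph{lower bound}: the smooth identity gives $r^{-1}\int[M_Q^r u_\epsilon-u_\epsilon]^+\,dx\to\int h(Q,-\nabla u_\epsilon)^+\,dx$ for each $\epsilon$, so the sandwich implies $\int h(Q,-\nabla u_\epsilon)^+\,dx\le \liminf_{r\to 0+}r^{-1}\lambda_n((A\oplus rQ)\setminus A)$, and letting $\epsilon\to 0$ by Reshetnyak gives $I\le\liminf$. The matching \emph{upper bound} is the hardest step, because the rate in the smooth identity for $u_\epsilon$ depends on $\|\nabla^2 u_\epsilon\|_\infty\to\infty$ as $\epsilon\to 0$. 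I would address it by a blow-up argument: at $\cH^{n-1}$-a.e.\ $a\in\partial^*A$, the rescalings $(A-a)/r$ converge to the half-space $H_a:=\{y\cdot\nu_A(a)\le 0\}$, so locally $(A\oplus rQ)\setminus A$ is asymptotically a translated slab of thickness $r\,h(Q,\nu_A(a))^+$ along $\nu_A(a)$, contributing volume density $h(Q,\nu_A(a))^+$ per unit $\cH^{n-1}$-area on $\partial^*A$. Integrating these densities over $\partial^*A$ by a dominated-convergence argument (justified by the uniform bound $r^{-1}\lambda_n((A\oplus rQ)\setminus A)\le\sum_{q\in Q}|q|\,P(A)$, a direct consequence of the BV translation bound applied to $\1_A$) yields $\limsup_{r\to 0+}r^{-1}\lambda_n((A\oplus rQ)\setminus A)\le I$, closing the argument.
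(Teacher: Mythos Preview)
Your reduction of \eqref{eq:main2} to \eqref{eq:main1} via translation, and your lower bound (mollification, the smooth identity, the sandwich $\int[M_Q^r u_\epsilon-u_\epsilon]^+\le\lambda_n((A\oplus rQ)\setminus A)$, then Reshetnyak continuity) are correct and essentially coincide with the paper's route through Proposition~\ref{PG} and Proposition~\ref{prop:VQ}(c). The genuine gap is the upper bound.

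Your blow-up sketch identifies the correct local picture but never converts it into a bound on the global volume. You do not exhibit any function on $\partial^*A$ whose $\cH^{n-1}$-integral equals or dominates $r^{-1}\lambda_n((A\oplus rQ)\setminus A)$; the ``uniform bound'' $\sum_{q}|q|\,P(A)$ you invoke is a bound on the scalar total, not a pointwise majorant of any such integrand, so no dominated-convergence argument over $\partial^*A$ is actually set up. More decisively, your sketch nowhere uses the finiteness of $Q$, yet Example~\ref{ex:1} of the paper exhibits a compact \emph{countable} $Q$ and a compact $A$ of finite perimeter with $r^{-1}\lambda_n((A\oplus rQ)\setminus A)\to\infty$. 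In that example the De~Giorgi blow-up to a half-space holds at every point of $\partial^*A$ (each such point lies on a smooth sphere) and the local slab thickness $r\,h(Q,\nu_A)^+$ is finite; the divergence comes from interactions between components of $A$ at different scales, which a pointwise blow-up cannot see. So the assembly step is exactly where the difficulty lies and it genuinely requires $\#Q<\infty$. The paper supplies this step by partitioning $\cF A=\bigcup_i\partial_iA$ according to which $u_i\in Q$ maximises $\nu_A\cdot u_i$, covering $(A\oplus rQ)\setminus A$ by the finitely many tubes $\partial_iA\oplus[0,ru_i]$ whose total volume is at most $rV^{-Q}(\1_A)$ by the area formula (see \eqref{E_decomp}), and then proving in Lemma~\ref{L_inequal}, via one-dimensional slicing (Lemma~\ref{l-aux}) and rectifiability of $\cF A$, that what is not covered has volume $o(r)$.
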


We show in Example~\ref{ex:1} that the result is no longer true if we allow $Q$ to be infinite, even if $Q$ is countable and compact.

The case when $Q$ is an $n$-dimensional convex body was considered by Chambolle et al. \cite{CLL:14}. They showed that \eqref{eq:main2} is true whenever it holds for $Q=B(0,1)$ (which, however, need not be true). They also proved the convergence in \eqref{eq:main2} in a weaker sense ($\Gamma$-convergence) for any $n$-dimensional convex body $Q$. Related results for special sets $A$ can be found in \cite{LV:16}.

Extending or complementing corresponding
results in \cite{Vil:10} and \cite{Gal:10}, we conclude with an
 application of Theorem~\ref{thm:main} for the contact distribution of stationary random sets. 
 Recall that for a stationary random \emph{closed} set $Z\subset\R^n$ (in the sense of Matheron; see, e.g.~\cite{SW:08}) with volume fraction $\overline{p}=\Pr(0\in Z)$,  the contact distribution function of $Z$ with compact structuring element $Q\subset \R^n$ is defined by 
 \[
 H_Q(r)=\Pr (Z\cap rQ\neq\emptyset\mid 0\not\in Z),\quad r\geq 0.
 \]
 We will derive a formula for the one-sided derivative of $H_Q$ at zero when $Q$ is finite.
The framework of sets with finite perimeter seems to be particularly well-suited
for this problem, as the result does not require any of the usual integrability assumptions. In addition, it even holds for the more general class of 
random measurable sets (RAMS) introduced in  \cite{Gal:LachRey:15}.

A RAMS is a random element from the space of Lebesgue measurable subsets of $\R^n$ modulo differences of Lebesgue measure zero, with topology induced by the $L^1_{\loc}$ convergence of the indicator functions. This setting includes random closed sets in the sense of Matheron as a special case.
The definitions of the volume fraction $\overline{p}$ and the contact distribution function $H_Q$ can be extended to stationary RAMS $Z\subset \R^n$; see Section \ref{s:appl}.
We use the notion of {\it specific perimeter} $\ova(Z)$ of $Z$ given as the (constant) density of the variation measure $|D\1_Z|$ with respect to $\lambda_n$ (cf.\ \cite{Gal:10} where the notion `specific variation' is used, or \cite{Rataj:15}), and oriented rose of directions ${\mathcal R}^*$ given as the distribution of the outer normal $-\Delta_{\1_Z}(z)$ at a typical point $z\in\partial^*Z$ in case $\overline{P}(Z)<\infty$; see Section~\ref{s:appl} for exact definitions.

\begin{theorem}\label{thm:contact}
Let $Q\ne\emptyset$  be finite. If 
$Z$ is a stationary RAMS, then the right sided derivative
$H_Q'(0+)$ of $H_Q$ at $0$ satisfies 
\begin{align}\label{eq:contactDistStat}
  (1-\overline p)H_Q'(0+)=\ova(Z)\int_{S^{n-1}} h(-Q,v)^+{\cR}^*(dv)
\end{align}
when  $\ova(Z)<\infty$. If  $\ova(Z)$ is infinite, and 
$\conv (Q\cup\{0\})$ has interior points, 
$$(1-\overline p)H_Q'(0+)=\infty.$$ 

If $Z$ is stationary and isotropic, and $\ova(Z)\in [0,\infty]$, then 
\begin{align}\label{eq:isotrop}
 (1-\overline p)H_Q'(0+)=\frac{1}{2}b(\conv(Q\cup \{0\}))\ova(Z)
\end{align}
where $b(\cdot)$ is the mean width. 
\end{theorem}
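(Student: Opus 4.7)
The plan is to reduce the probabilistic statement to a deterministic dilation-volume limit accessible to Theorem~\ref{thm:main}, then average using the definitions of $\ova(Z)$ and $\cR^*$. The elementary identity
\[
\{0\notin Z\}\cap\{Z\cap rQ\neq\emptyset\}=\{0\in(Z\oplus(-rQ))\setminus Z\}
\]
together with stationarity and Fubini yields, for any bounded Borel $W\subset\R^n$ with $\lambda_n(W)>0$,
\[
(1-\ova)H_Q(r)=\frac{1}{\lambda_n(W)}\,\bbE\,\lambda_n\bigl(((Z\oplus(-rQ))\setminus Z)\cap W\bigr),
\]
so it suffices to study the limit of the right-hand side divided by $r$ as $r\to 0+$.

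Put $\rho=\max_{q\in Q}|q|$ and $W^+=W\oplus B(0,\rho)$. For $r\leq 1$ and $x\in W$, the condition $x\in Z\oplus(-rQ)$ depends only on $Z\cap W^+$, so the integrand is unchanged if $Z$ is replaced by $\tilde Z:=Z\cap W^+$. Under $\ova(Z)<\infty$, the random measure $|D\1_Z|$ is almost surely locally finite, and choosing $W$ generically so that $|D\1_Z|(\partial W^+)=0$ a.s., $\tilde Z$ is a set of bounded volume and finite perimeter. Applying Theorem~\ref{thm:main} to $\tilde Z$ with test set $-Q$, together with a short tube argument showing that the portion of $(\tilde Z\oplus(-rQ))\setminus\tilde Z$ near a boundary point $x$ of $\tilde Z$ localizes to $\{x\}$ as $r\to 0+$ (and so lies in $W$ precisely when $x\in W$, modulo a vanishing error near $\partial W$ controlled by $|D\1_Z|(\partial W)=0$), gives the a.s.\ pointwise limit
\[
\lim_{r\to 0+}\frac{\lambda_n(((Z\oplus(-rQ))\setminus Z)\cap W)}{r}=\int_{\partial^*Z\cap W}h(-Q,-\Delta_{\1_Z}(x))^+\,d\cH^{n-1}(x).
\]
The classical local $\BV$ inequality $\lambda_n((A\triangle(A-h))\cap W)\leq|h|\,|D\1_A|(W\oplus B(0,|h|))$ applied to each translate gives the $r$-uniform dominating bound $\bigl(\sum_{q\in Q}|q|\bigr)|D\1_Z|(W^+)$, whose expectation $\bigl(\sum_q|q|\bigr)\ova(Z)\lambda_n(W^+)$ is finite, so dominated convergence converts the pointwise limit into a limit of expectations.

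The expected boundary integral is then identified using the defining properties of $\ova(Z)$ and $\cR^*$: for every nonnegative measurable $f\colon S^{n-1}\to\R$,
\[
\bbE\int_{\partial^*Z\cap W}f(-\Delta_{\1_Z}(x))\,d\cH^{n-1}(x)=\ova(Z)\lambda_n(W)\int_{S^{n-1}}f\,d\cR^*.
\]
Taking $f(v)=h(-Q,v)^+$ yields \eqref{eq:contactDistStat}. For the isotropic case, $\cR^*$ is the normalized uniform measure on $S^{n-1}$; writing $h(-Q,v)^+=h(\conv((-Q)\cup\{0\}),v)$ and using the identity $b(K)=2\int_{S^{n-1}}h(K,v)\,\cR^*(dv)$ (valid for the uniform $\cR^*$) together with the reflection invariance $b(-K)=b(K)$ reduces the integral to $\tfrac12 b(\conv(Q\cup\{0\}))$, giving \eqref{eq:isotrop} when $\ova(Z)<\infty$.

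When $\ova(Z)=\infty$, Fatou's lemma replaces dominated convergence in the above scheme to give
\[
(1-\ova)\liminf_{r\to 0+}\frac{H_Q(r)}{r}\geq\ova(Z)\int_{S^{n-1}}h(-Q,v)^+\,\cR^*(dv).
\]
If $\conv(Q\cup\{0\})$ has non-empty interior then $v\mapsto h(-Q,v)^+$ is continuous and strictly positive on $S^{n-1}$, hence bounded below by a positive constant, so the right-hand side equals $+\infty$, proving the second assertion. The isotropic infinite case uses the same Fatou bound combined with the mean-width identification above: the integral $\tfrac12 b(\conv(Q\cup\{0\}))$ is strictly positive for every $Q\neq\{0\}$, while $Q=\{0\}$ gives $H_Q\equiv 0$ and is covered by the convention $\infty\cdot 0=0$. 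The main obstacle I anticipate is the localization step: Theorem~\ref{thm:main} requires globally finite perimeter, but a stationary $Z$ has infinite total perimeter almost surely, and the restriction of the deterministic limit to $W$ must be performed without introducing spurious contributions from $\partial W^+\cap Z$; this is handled by the tube estimate together with the generic choice of $W$ above.
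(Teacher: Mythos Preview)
Your overall plan---reduce to a deterministic dilation limit, apply the main geometric theorem, then average---is the same as the paper's, but two of your steps do not go through as written.

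\textbf{Localization in the case $\ova(Z)<\infty$.} Theorem~\ref{thm:main} is a \emph{global} statement: it gives the limit of $r^{-1}\lambda_n((\tilde Z\oplus(-rQ))\setminus\tilde Z)$ as an integral over \emph{all} of $\partial^*\tilde Z$. You need the limit of the measure restricted to $W$, expressed as an integral over $\partial^*Z\cap W$. Your ``short tube argument'' does not follow from Theorem~\ref{thm:main}, and the boundary contributions from $\partial^*\tilde Z\cap(W^+\setminus W)$ and from $\partial W^+$ are not negligible for a fixed $W$ (they are of order $\cH^{n-1}(\partial W^+)$, which does not vanish). The paper avoids this by a sandwich: with $Z_s=Z\cap B(0,s)$ it traps $((Z\oplus(-rQ))\setminus Z)\cap B(0,t)$ between $(Z_{t-1}\oplus(-rQ))\setminus Z_{t-1}$ (minus a thin annulus) and $(Z_{t+1}\oplus(-rQ))\setminus Z_{t+1}$, applies Proposition~\ref{prop:finiteQ} to each $Z_{t\pm1}$, and then lets $t\to\infty$ so that the spurious sphere contributions $\cH^{n-1}(tS^{n-1})/(t^n\kappa_n)\to 0$. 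You need either this double limit or a genuinely local version of the main theorem; the latter is not available.

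\textbf{The case $\ova(Z)=\infty$.} Here your argument breaks down completely. First, $\cR^*$ is defined only when $\ova(Z)<\infty$, so the expression $\ova(Z)\int h(-Q,v)^+\,\cR^*(dv)$ is meaningless. More seriously, $\ova(Z)=\infty$ includes the possibility that $Z$ does \emph{not} have almost surely locally finite perimeter (see the paragraph after Lemma~\ref{lem:W}); then $\tilde Z$ need not have finite perimeter and Theorem~\ref{thm:main} does not apply, so there is no pointwise limit to feed into Fatou. The paper instead works with mollifications $f_j=\1_{Z_t}*\rho_j\in C^1\cap\BV$, uses the pointwise inequality $G(-rQ,\1_{Z_t})\ge G(-rQ,f_j)$ of Lemma~\ref{G_smooth}, then Proposition~\ref{PG} and Lemma~\ref{lem:VqV}(b) to obtain $\liminf_{r\to 0+}r^{-1}G(-rQ,\1_{Z_t})\ge sV(\1_{Z_t})$ with $s>0$ the inradius of $\conv(Q\cup\{0\})$. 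This holds for \emph{any} bounded measurable $Z_t$. Lemma~\ref{lem:W} then identifies the outer limit as $s\ova(Z)=\infty$. The isotropic infinite case similarly requires a separate directional-variation argument since $\conv(Q\cup\{0\})$ need not be full-dimensional.
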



We would like to stress that the methods of proofs are different from the classical approaches in stochastic geometry when dealing with sets with finite perimeter. Namely, we use typically
approximations of characteristic functions by smooth functions of
bounded variation, show related formulas for them, and apply
continuity arguments to obtain the desired results. This means that we
have to define functionals to be dealt with not only for sets but also
for functions. 

\medskip

The paper is organized as follows. In Section~\ref{sec:prelim} we recall the
usual and directional variation of a function $f$, 
discuss basic properties, and define sets of finite perimeter. 
The notion of the variation $V^Q(f)$ of $f$ with respect to a compact set $Q$ is introduced and discussed in Section~\ref{sec:Vq}. This is a special case of anisotropic variation with respect to a Finsler metric, see \cite{AB:94}. In particular, $V^{-Q}(\1_A)$ coincides with
 the right hand side of \eqref{eq:main2} when $0\in Q$. 
Section~\ref{s:dv} is devoted to the proof of the main result, 
Theorem~\ref{thm:main}. While one equality (Proposition~\ref{PG}) is obtained by standard methods (similarly as the same inequality for $n$-dimensinal convex bodies in \cite{CLL:14}), the other inequality (Corollary~\ref{cor:limsup}) is more difficult. The above mentioned application to
random sets and the proof of Theorem~\ref{thm:contact} is described in Section~\ref{s:appl}.

\section{Preliminaries}\label{sec:prelim}
We present here some definitions and properties of functions of bounded variation and sets with finite perimeter. As reference we use mostly the book \cite{AFP00}.

Let $\Omega$ be a nonempty open subset of $\R^n$ and $0\ne u\in\R^n$. 
We write  $L_{\mathrm{loc}}^1(\Omega)$ 
	for the space of all functions on $\Omega$ that are locally Lebesgue-integrable.
The {\it distributional directional derivative} of a function $f\in L_{\mathrm{loc}}^1(\Omega)$ in direction $u$ is the linear functional
\begin{align}\label{eq:du}
D_uf: \phi\mapsto -\int_{\Omega}\frac{\partial\phi}{\partial u}(x)f(x)\, dx,\quad \phi\in C^\infty_c(\Omega).
\end{align}
Here $\frac{\partial\phi}{\partial u}(x)$ is the classical
directional derivative of a smooth function, $dx$  denotes the
integration w.r.t.\ Lebesgue measure and $C^\infty_c(\Omega)$ stands
for the space of infinitely differentiable functions on $\Omega$ with
compact support. 
We define the {\it directional variation} of $f\in L_{\mathrm{loc}}^1(\Omega)$ in the direction $u\in S^{n-1}$ as
$$V_u(f,\Omega):=\sup\left\{D_uf(\phi):\, \phi\in C^\infty_c(\Omega),\, \|\phi\|_\infty\leq 1\right\}.$$
If the last expression is finite and $f\in L^1(\Omega)$, we say that $f$ has {\it finite
  directional variation} (in $\Omega$ and) in direction $u$. We
denote by $\BV_u(\Omega)$ the space of all such functions. 
Note that, by the Riesz representation theorem, $f\in\BV_u(\Omega)$ if and
only if the
distributional directional derivative $D_uf$ can be represented as a 
finite Radon measure on $\Omega$. In this case we have 
$V_u(f,\Omega)=|D_uf|(\Omega)$, where $|\mu|$ denotes the variation
measure of the (real- or vector-valued) Radon measure $\mu$ given by 
\[
|\mu|(A)=\sup\left\{\sum_{h=1}^\infty |\mu(E_h)|: (E_1,E_2,\ldots) \text{
	forms a Borel partition of } A\right\}
\]
for any Borel set $A\subset \Omega$.

The {\it variation} of a function $f\in L^1_{\mathrm{loc}}(\Omega)$ is defined as
\[
  V(f,\Omega):=\sup\left\{\int_{\Omega}f(x)\Div\varphi(x)\, dx:\,
  \varphi\in C^\infty_c(\Omega,\R^n),\, \| |\varphi|\|_\infty\leq
  1\right\}.
\]

Here, $C^\infty_c(\Omega,\R^n)$ is the vector space of $\R^n$-valued 
infinitely differentiable functions on $\Omega$ with
compact support, and
$\||\varphi|\|_\infty$ is the $L^\infty$-norm of the 
Euclidean norm $|\varphi|=\sqrt{\varphi_1^2+\cdots +\varphi_n^2}$ of 
 $\varphi=(\varphi_1,\ldots,\varphi_n)$.
If  $V(f,\Omega)$  is finite and $f\in L^1(\Omega)$, we say that $f$ has {\it bounded
  variation} {in $\Omega$}. The vector space of all functions
 of bounded variation is denoted by
$\BV(\Omega)$.  
Functions  $f\in L_{\mathrm{loc}}^1(\Omega)$ with bounded variation in any relatively compact open subset of $\Omega$ are said to have \emph{locally} bounded
		variation {in $\Omega$}.  We have $f\in\BV(\Omega)$ if and only if
$f\in\BV_u(\Omega)$ for all $u\in S^{n-1}$ and then,
\begin{align}\label{eq:rotatMittel}
V(f,\Omega)=
(2\kappa_{n-1})^{-1}\int_{S^{n-1}}V_u(f,\Omega)\,
\cH^{n-1}(du),
\end{align}
cf.\ \cite{Gal:10}. Here and in the following   $\cH^k$ denotes the 
$k$-dimensional Hausdorff measure  in $\R^n$, and
$\kappa_k$ is the $k$-dimensional volume of the Euclidean unit ball  in
$\R^k$.

If $f\in\BV(\Omega)$ then there exists a finite  $\R^n$-valued Radon
measure $Df$ on $\Omega$ such that $Df(A)\cdot u=D_uf(A)$ for all
Borel-sets $A\subset\Omega$, and $u\ne 0$; 
$Df$ represents the {\it distributional derivative} of $f$, cf.\ \cite[\S3.1]{AFP00}.
The variation of $f$ is the total variation of $Df$: 
\begin{align} \label{eq:VisVariation} V(f,\Omega)=|Df|(\Omega).
\end{align} 

Let 
\begin{equation}  \label{polar}
Df=\Delta_f\, |Df|
\end{equation}
be the {\it polar decomposition} of $Df$, i.e., $\Delta_f\in
L^1(\Omega,|Df|)$ taking values in $S^{n-1}$ is the Radon-Nikod\'ym
density of $Df$ w.r.t.\ $|Df|$ (cf.\ \cite[Corollary 1.29]{AFP00}). 
Note that if $f\in\BV(\Omega)$ and $u\ne 0$ then $V_u(f,\Omega)$ 
can be written in the form
$$V_u(f,\Omega)=\int_\Omega |u\cdot\Delta_f(x)|\, |Df|(dx).$$

Note also that if $f\in C^1(\Omega)\cap \BV(\Omega)$ then $Df(dx)=\nabla f(x)\, dx$, $|Df|(dx)=|\nabla f(x)|\, dx$, and
$$\Delta_f(x):=\begin{cases}
\frac{\nabla f(x)}{|\nabla f(x)|},&\nabla f(x)\neq 0,\\
0,&\text{otherwise},
\end{cases}$$
is a version of $\Delta_f$, where $\nabla f(x)$ denotes the gradient of $f$ at $x$.

Let $(f_j)$ be a sequence of functions in $\BV(\Omega)$ and let 
$f\in\BV(\Omega)$. Following \cite[3.14]{AFP00}, 
we say that $(f_j)$ {\it converges strictly} to $f$ if 
$f_j\to f$ in $L^1(\Omega)$ and, additionally, 
$V(f_j,\Omega)\to V(f,\Omega)$. 
As a basic example, consider any function $f\in\BV(\Omega)$ and a
sequence of $C^\infty$ mollifiers $\rho_j$ (i.e.,
$\rho_j(y)=j^n\rho(jy)$ with a {nonnegative} function $\rho\in C^\infty_c$ fulfilling
$\int_{\R^n}\rho dx =1$). Then, the convolutions $f*\rho_j$ ({\it mollifications of $f$}) belong to $C^\infty(\Omega')$ and $f*\rho_{{j}} \to f$ strictly in a slightly 
``shrunk'' open set $\Omega'=\{x\in \Omega:
	\text{dist}(x,\partial \Omega)>\varepsilon\}$ 
(cf.\ \cite[\S2.1,~3.1]{AFP00}).
That the set $\Omega$ has to be replaced by a smaller one can be avoided by mollifying $f \varphi_h$, where $(\varphi_h)$ 	is a smooth partition of unity in $\Omega$ relative to a locally finite covering $(\Omega_h)$     with open, relative compact sets. The corresponding result can be found in \cite[Theorem~5.3.3]{Ziemer} and implies the third statement in the following collection of well-known basic properties of the variation.

\begin{proposition}[Basic properties of the variation]  \label{prop:Var}  \mbox{} \\  \vspace{-2ex}
	\begin{itemize}
		\item[(a)] For $f\in \BV(\Omega)\cap C^1(\Omega)$, 
		\begin{align*}  V(f,\Omega)=\int_{\Omega} |\nabla f|dx.
		\end{align*} 
		\item[(b)] If  $f_j\to f$ in $L^1(\Omega)$ then $ V(f,\Omega)\le \liminf_{j\to\infty}V(f_j,\Omega)$.
		\item[(c)] For $f\in \BV(\Omega)$, there is a sequence  of functions $(f_j)$ in $C^\infty(\Omega)\cap \BV(\Omega)$ converging strictly to $f$.
	\end{itemize}
\end{proposition}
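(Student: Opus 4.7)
For part (a), the plan is to prove both inequalities directly. Given $\varphi\in C_c^\infty(\Omega,\R^n)$ with $\| |\varphi| \|_\infty\le 1$, integration by parts (valid because $f\in C^1$ and $\varphi$ has compact support in $\Omega$) yields $\int_\Omega f\Div\varphi\,dx=-\int_\Omega \nabla f\cdot \varphi\,dx\le \int_\Omega|\nabla f|\,dx$, so $V(f,\Omega)\le \int_\Omega|\nabla f|\,dx$, which incidentally gives $|\nabla f|\in L^1(\Omega)$. For the reverse bound I would multiply $-\nabla f/\max(|\nabla f|,1/k)$ by a compactly supported smooth cutoff $\eta_k\nearrow 1$ and mollify lightly to obtain admissible fields $\varphi_k\in C_c^\infty(\Omega,\R^n)$ with $-\nabla f\cdot\varphi_k\to|\nabla f|$ pointwise and $|\nabla f\cdot\varphi_k|\le|\nabla f|$, so that dominated convergence gives $\int_\Omega f\Div\varphi_k\,dx\to\int_\Omega|\nabla f|\,dx$.

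Part (b) is immediate from the definition: for any admissible $\varphi$, $\Div\varphi$ is bounded with compact support, so $\int_\Omega f\Div\varphi\,dx=\lim_j\int_\Omega f_j\Div\varphi\,dx\le \liminf_j V(f_j,\Omega)$ by $L^1$-convergence of $f_j$ to $f$; taking the supremum over $\varphi$ concludes the argument.

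For part (c), the plan is the localized mollification argument alluded to in the text. Let $(\Omega_h)$ be a locally finite cover of $\Omega$ by relatively compact open sets with $\overline{\Omega_h}\subset\Omega$, let $(\varphi_h)$ be a subordinate smooth partition of unity, and let $(\rho_\ep)$ be standard nonnegative symmetric mollifiers. For each $j\in\bbN$ I would select radii $\ep_{h,j}>0$ small enough that $\rho_{\ep_{h,j}}*(f\varphi_h)$ has compact support in $\Omega$ and that $\|\rho_{\ep_{h,j}}*(f\varphi_h)-f\varphi_h\|_{L^1(\Omega)}+\|\rho_{\ep_{h,j}}*(f\nabla\varphi_h)-f\nabla\varphi_h\|_{L^1(\Omega)}\le 2^{-h-j}$. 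Set $f_j:=\sum_h\rho_{\ep_{h,j}}*(f\varphi_h)$; local finiteness gives $f_j\in C^\infty(\Omega)$ and $f_j\to f$ in $L^1(\Omega)$, so strict convergence reduces to $\limsup_j V(f_j,\Omega)\le V(f,\Omega)$, the matching lower bound being (b). Given admissible $\psi$, I would rewrite $\int_\Omega f_j\Div\psi\,dx$ using self-adjointness of the symmetric mollifier and the Leibniz rule as $\sum_h\int_\Omega f\Div(\varphi_h(\rho_{\ep_{h,j}}*\psi))\,dx-\sum_h\int_\Omega f(\rho_{\ep_{h,j}}*\psi)\cdot\nabla\varphi_h\,dx$. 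The field $\tilde\psi:=\sum_h\varphi_h(\rho_{\ep_{h,j}}*\psi)$ is still admissible, since $|\rho_{\ep_{h,j}}*\psi|\le 1$ and $\sum_h\varphi_h=1$, so the first sum is bounded by $V(f,\Omega)$; the identity $\sum_h\nabla\varphi_h=0$ collapses the second sum to $\sum_h\int_\Omega\psi\cdot[\rho_{\ep_{h,j}}*(f\nabla\varphi_h)-f\nabla\varphi_h]\,dx$, whose absolute value is at most $\sum_h 2^{-h-j}\le 2^{-j}$.

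The main obstacle is exactly this bookkeeping in (c): choosing the cover, partition of unity, and mollification radii so that a single admissible test field $\psi$ splits into a still-admissible field (thanks to $\sum_h\varphi_h=1$) against which $f$ pairs with variation at most $V(f,\Omega)$, plus a genuinely small remainder (thanks to $\sum_h\nabla\varphi_h=0$) that the budget $2^{-h-j}$ controls. Once this splitting is in hand, $f_j\to f$ strictly by (b) together with $\limsup_j V(f_j,\Omega)\le V(f,\Omega)$.
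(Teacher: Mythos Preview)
Your proof is correct. The paper does not actually prove this proposition: it is stated as a collection of well-known facts, with part (c) attributed to \cite[Theorem~5.3.3]{Ziemer}, and the text immediately preceding the proposition sketches exactly the localized-mollification-via-partition-of-unity construction that you carry out. So your argument for (c) is precisely the one the paper points to, and your treatments of (a) and (b) are the standard direct arguments.

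One minor expository point in (a): the inequality $V(f,\Omega)\le\int_\Omega|\nabla f|\,dx$ by itself does not give $|\nabla f|\in L^1(\Omega)$. Rather, this integrability follows because for $f\in C^1(\Omega)$ the distributional derivative is $\nabla f\,dx$, and the hypothesis $f\in\BV(\Omega)$ forces this to be a finite Radon measure, so $\int_\Omega|\nabla f|\,dx=|Df|(\Omega)<\infty$. With that established, your dominated-convergence step for the reverse inequality is justified.
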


The following lemma states that the positive and negative parts of  $D_uf$, $u\ne 0$,  have the same total mass when $f\in \BV(\Omega)$ and $\Omega=\R^n$. This is not necessarily true when $\Omega\ne \R^n$. For instance, $f(x)=x$ on $\Omega=(0,1)$ satisfies 
$(D_1f)^+(\Omega)=1$, but $(D_1f)^-(\Omega)=0$. 
\begin{lemma}\label{lem:lin}
	For $f\in \BV$ we have $Df(\R^n)=0$. In particular, 
\begin{align}\label{centerCond}
D_uf(\R^n)=\int_{\R^n} \left(u\cdot \Delta_f(x)\right) |Df|(dx)= 0 
\end{align} 
for all $u\ne 0$. 
\end{lemma}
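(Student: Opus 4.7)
The plan is to reduce the vector-valued identity $Df(\R^n)=0$ to the scalar identity $D_uf(\R^n)=0$ for each $u\ne 0$, exploiting the relation $Df(A)\cdot u = D_uf(A)$ stated just before the lemma. Once that scalar statement is in hand, the second equality in \eqref{centerCond} follows immediately by writing $D_uf = (u\cdot\Delta_f)\,|Df|$ via the polar decomposition $Df=\Delta_f\,|Df|$.

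To establish $D_uf(\R^n)=0$, I would test $D_uf$ against an expanding family of smooth cutoffs. Fix $\eta\in C^\infty_c(\R^n)$ with $0\le\eta\le 1$, $\eta\equiv 1$ on $B(0,1)$ and $\spt \eta\subset B(0,2)$, and set $\phi_R(x):=\eta(x/R)$. Since $\phi_R\in C^\infty_c(\R^n)$, the definition \eqref{eq:du} gives
\[
  D_uf(\phi_R) \;=\; -\int_{\R^n}\frac{\partial \phi_R}{\partial u}(x)\,f(x)\,dx.
\]
I would then send $R\to\infty$ on both sides. On the left, the functional $D_uf$ is represented by a finite signed Radon measure (because $f\in\BV(\R^n)$), so $D_uf(\phi_R)=\int\phi_R\,d(D_uf)$, and dominated convergence (with $\phi_R\to 1$ pointwise and $|\phi_R|\le 1$) gives $D_uf(\phi_R)\to D_uf(\R^n)$. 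On the right, the chain rule yields $|\partial\phi_R/\partial u|\le |u|\,\|\nabla\eta\|_\infty/R$ with support in $B(0,2R)\setminus B(0,R)$, so the integral is bounded in modulus by $(|u|\,\|\nabla\eta\|_\infty/R)\,\|f\|_{L^1(\R^n)}\to 0$; this is where the built-in requirement $f\in L^1(\R^n)$ of $\BV(\R^n)$ is essential.

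I do not anticipate a substantive obstacle. The only delicate point is that the cutoff argument is tailor-made for $\Omega=\R^n$: on a bounded domain the cutoffs cannot avoid $\partial\Omega$, and the right-hand boundary term need not vanish. This is precisely why the excerpt highlights the counterexample $f(x)=x$ on $(0,1)$, where the conclusion fails.
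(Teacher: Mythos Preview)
Your proof is correct and takes essentially the same approach as the paper: both test $Df$ (or equivalently $D_uf$) against an expanding family of smooth cutoffs, move the derivative onto the cutoff via the distributional definition, and use $f\in L^1(\R^n)$ to make the resulting integral vanish. The only cosmetic difference is that the paper uses mollified ball indicators $\phi_m=\1_{B(0,m)}*\rho$ (with uniformly bounded gradient supported in an outgoing annulus, so that $\int_{R_m}|f|\,dx\to 0$), whereas you use scaled cutoffs $\eta(\cdot/R)$ (whose gradient picks up the factor $1/R$), but both estimates lead to the same conclusion.
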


\begin{proof} Fix $f\in \BV$ and put $\phi_m=\1_{B(0,m)}*\rho$,  where $0\le \rho\in C^\infty$ is a mollifier with support in $B(0,1)$.  Clearly, $\nabla \phi_m$ is zero outside the annulus $R_m=B(0,m+1)\setminus B(0,m-1)$, 
		and  $\|\frac{\partial \phi_m}{\partial x_i}\|_\infty<\kappa_n \||\nabla \rho|\|_\infty$, so
		\[
		\left|	\int_{\R^n} \phi_m (Df)_i(dx)\right|=\left|-\int_{\R^n}\frac{\partial \phi_m}{\partial x_i}f(x)dx\right|\le \kappa_n \||\nabla \rho|\|_\infty\int_{R_m}|f|dx.
		\] for all
		$i\in\{1,\ldots,n\}$. As $f\in L^1$, the right hand side converges to $0$. The left hand side converges to $\left|	\int_{\R^n} (Df)_i(dx)\right|$, as 
		$\phi_m$ is an increasing sequence with pointwise limit $1$, and $(Df)_i$ is a finite Radon measure. We conclude 
		$Df(\R^n)=0$ and 
		\begin{align*}
		\int_{\R^n} \left(u\cdot \Delta_f(x)\right) |Df|(dx)=
		u\cdot \int_{\R^n} Df(dx)= 0,
		\end{align*} as claimed.
\end{proof}

\medskip

We shall work with the following generalization of directional
variations. Let $L$ be a linear subspace of $\R^n$ of dimension $k\in \{1,\ldots,n\}$. If $C^\infty_c(\Omega,L)$ denotes the vector space of all functions in $C^\infty_c(\Omega,\R^n)$ with values in $L$, 
we may define the \emph{$L$-variation} in $\Omega$ of $f\in L_{ \mathrm{loc}}^1(\Omega)$  as
\[
V_L(f,\Omega):=\sup\left\{\int_{\Omega}f(x)\sum_{i=1}^k\frac{\partial
   ( \varphi\cdot u_i)}{\partial u_i}(x)\, dx:\, \varphi\in
  C^\infty_c(\Omega,L),\, \| |\varphi|\|_\infty\leq 1\right\},
\]
where  $\{u_1,\ldots,u_k\}$ is an orthonormal basis of $L$. This definition does not depend on the choice of the orthonormal basis.

Clearly, when $f\in L^1(\Omega)$, $V_L(f,\Omega)<\infty$ if and only if  $V_u(f,\Omega)<\infty$
for all unit vectors $u\in L$, and in this case, we say that  $f$ has \emph{finite directional variation in $L$}, writing  $f\in \BV_L(\Omega)$.
We have $V_{\R^n}(f,\Omega)=V(f,\Omega)$, and 
$V_{\Span\{u\}}(f,\Omega)=V_u(f,\Omega)$ when $u\in S^{n-1}$.  
If $L\subseteq L'$ are two subspaces then $V_L(f,\Omega)\leq V_{L'}(f,\Omega)$.

\begin{proposition}[Basic properties of the directional variation]  \label{propDirVar}
The following assertions hold for a linear subspace $\{0\}\ne L\subset \R^n$. 
\begin{enumerate}
	\item[(a)] For $f\in \BV(\Omega)$ we have 
         \begin{equation}  \label{E-VL}
               V_L(f,\Omega)=|p_L (Df)|(\Omega)=\int_{\Omega}|p_L\Delta_f(x)|\, |Df|(dx), 
         \end{equation}
         where $p_L$ denotes the orthogonal projection on $L$. If, in addition, $f\in C^1(\Omega)$, 
         \[V_L(f,\Omega)=\int_{\Omega} |p_L \nabla f|dx.
         \]
\item[(b)] If $f_j\to f$ in $L^1(\Omega)$ then $V_L(f,\Omega)\leq\liminf_{j\to\infty} V_L(f_j,\Omega)$.
\item[(c)] If $(f_j)$ is a sequence converging strictly to $f\in\BV(\Omega)$, then  $V_L(f_j,\Omega)\to V_L(f,\Omega)$ as $j\to \infty$. 
\end{enumerate}
\end{proposition}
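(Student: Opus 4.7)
The plan is standard: (a) is essentially a bookkeeping identification of the defining supremum with the total variation of a projected vector measure, (b) is lower semicontinuity by dominated convergence, and the delicate statement (c) will be deduced from Reshetnyak's continuity theorem \cite[Theorem~2.39]{AFP00}.

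For part (a), I first note that if $\varphi\in C_c^\infty(\Omega,L)$ is written as $\varphi=\sum_{i=1}^{k}(\varphi\cdot u_i)u_i$, then
\[
\sum_{i=1}^{k}\frac{\partial(\varphi\cdot u_i)}{\partial u_i}=\Div\varphi,
\]
since the divergence is invariant under an orthonormal change of coordinates and all components of $\varphi$ orthogonal to $L$ vanish. The definition of $V_L(f,\Omega)$ thus becomes the supremum of $\int_\Omega f\,\Div\varphi\,dx$ over admissible $\varphi$. Integration by parts against $Df$, together with the polar decomposition, rewrites the integrand as $-\varphi\cdot p_L\Delta_f$ against $|Df|$, since $\varphi$ takes values in $L$. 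A standard density argument shows that the supremum over admissible $\varphi$ of $-\int_\Omega\varphi\cdot p_L\Delta_f\,d|Df|$ equals $|p_L(Df)|(\Omega)=\int_\Omega|p_L\Delta_f|\,d|Df|$, giving \eqref{E-VL}. The $C^1$-case follows since then $|Df|=|\nabla f|\,dx$ and $\Delta_f=\nabla f/|\nabla f|$ wherever $\nabla f\ne 0$.

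Part (b) is immediate from the definition: for each admissible $\varphi$, the smooth function $\Div\varphi$ is bounded and compactly supported, so $\int_\Omega f_j\,\Div\varphi\,dx\to\int_\Omega f\,\Div\varphi\,dx$ by $L^1$-convergence, whence $\int_\Omega f\,\Div\varphi\,dx\le\liminf_j V_L(f_j,\Omega)$; taking the supremum over $\varphi$ yields the claim.

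For part (c), the temptation to apply (b) to both $L$ and $L^\perp$ and subtract from the scalar strict continuity of $V$ fails, since $|p_L v|+|p_{L^\perp}v|\ge|v|=1$ with generally strict inequality. Instead, strict convergence $f_j\to f$ in $\BV(\Omega)$ yields $Df_j\to Df$ weakly-$*$ as $\R^n$-valued Radon measures (for each $\psi\in C_c^\infty(\Omega,\R^n)$, $\int\psi\cdot dDf_j=-\int f_j\,\Div\psi\,dx\to-\int f\,\Div\psi\,dx$, and the uniform bound from $|Df_j|(\Omega)\to|Df|(\Omega)$ extends this to test fields in $C_0$), while $|Df_j|(\Omega)\to|Df|(\Omega)$ by hypothesis. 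These two facts are precisely the hypotheses of Reshetnyak's continuity theorem, which applied to the bounded continuous function $v\mapsto|p_L v|$ on $S^{n-1}$ yields
\[
V_L(f_j,\Omega)=\int_\Omega|p_L\Delta_{f_j}|\,d|Df_j|\longrightarrow\int_\Omega|p_L\Delta_f|\,d|Df|=V_L(f,\Omega),
\]
the representation from (a) being used on both sides. The hardest part conceptually is recognizing that pointwise information about the direction $\Delta_f$, not just about the mass $|Df|$, is needed; because $v\mapsto|p_L v|$ is continuous and positively $1$-homogeneous but not linear, weak-$*$ convergence of $Df_j$ alone is insufficient, and Reshetnyak's theorem is the natural tool supplying the missing oscillation control.
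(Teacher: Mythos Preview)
Your proof is correct and follows essentially the same route as the paper: for (a) and (b) the paper simply refers to \cite[Proposition~3.6]{AFP00} and calls the argument obvious, whereas you spell out the details, and for (c) both you and the paper verify weak-$*$ convergence of $Df_j$ together with convergence of total masses and then invoke Reshetnyak's continuity theorem (the paper cites \cite[Proposition~3.13]{AFP00} for the former and records the latter as Lemma~\ref{L-Resh}) applied to $v\mapsto|p_Lv|$.
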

\begin{proof}
	The first two statements  generalize slightly \cite[Proposition~3.6]{AFP00} and we can skip the proof as it is quite obvious. To show (c) let 
	$(f_j)$ be a sequence converging strictly to $f\in\BV(\Omega)$.
	 By \cite[Proposition~3.13]{AFP00} the measures $Df_j$ converge weakly to $Df$ in $\Omega$ and their total variations converge to $|Df|(\Omega)$. The claim now follows from a special case of the Reshetnyak continuity theorem, Lemma \ref{L-Resh}, below, which is quoted here from the literature for easy reference.
\end{proof}

\begin{lemma}[{\cite [Proposition~2.39]{AFP00}}]  \label{L-Resh}
	Let $\mu_0,\mu_1,\ldots$ be finite vector-valued Radon measures on an open set $\Omega\subset \R^n$, such that $\mu_j$ converges weakly to $\mu_0$ in $\Omega$ and $|\mu_j|(\Omega)\to |\mu_0|(\Omega)$ as $j\to \infty$. Then
	$$\int_{\Omega}h(g_j(x))\, d|\mu_j|(x)\to \int_{\Omega}h(g_0(x))\, d|\mu_0|(x), \quad j\to \infty,$$
	for every  continuous and bounded function $h:\Omega\to\R$, where $g_j$ is the Radon-Nikod\'ym
	density of $\mu_j$ with respect to $|\mu_j|$. 
\end{lemma}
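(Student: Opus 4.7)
The plan is to lift the problem to the enlarged space $\Omega\times S^{n-1}$ by introducing the push-forward measures
\begin{equation*}
\nu_j(E):=|\mu_j|\bigl(\{x\in\Omega:(x,g_j(x))\in E\}\bigr),\qquad E\subset \Omega\times S^{n-1}\text{ Borel}.
\end{equation*}
These are finite Radon measures on the locally compact space $\Omega\times S^{n-1}$ and, by construction, $\int_\Omega h(g_j(x))\,d|\mu_j|(x)=\int_{\Omega\times S^{n-1}} h(v)\,d\nu_j(x,v)$ for every bounded Borel function $h$ on $S^{n-1}$. It therefore suffices to prove that $\nu_j\to\nu_0$ weakly$^*$ and then evaluate the limit against the continuous bounded function $(x,v)\mapsto h(v)$, which will require a mild tightness cutoff in the $x$-variable.

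To establish weak$^*$ convergence of $\nu_j$ I would test it against two families of functions on $\Omega\times S^{n-1}$ whose linear span is dense in $C_0(\Omega\times S^{n-1})$. First, for $\phi(x,v)=\phi_1(x)$ with $\phi_1\in C_c(\Omega)$, one has $\int\phi_1\,d\nu_j=\int_\Omega\phi_1\,d|\mu_j|$; the scalar measures $|\mu_j|$ converge weakly$^*$ to $|\mu_0|$ because the lower semicontinuity $|\mu_0|(U)\le\liminf|\mu_j|(U)$ on open $U$, combined with the assumed mass equality $|\mu_j|(\Omega)\to|\mu_0|(\Omega)$, yields the portmanteau-type bound $\limsup|\mu_j|(K)\le|\mu_0|(K)$ on closed $K$. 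Second, for $\phi(x,v)=\psi(x)\cdot v$ with $\psi\in C_c(\Omega,\R^n)$, we have $\int\phi\,d\nu_j=\int_\Omega\psi\cdot g_j\,d|\mu_j|=\int_\Omega\psi\,d\mu_j$, which converges to $\int\psi\,d\mu_0=\int_\Omega\psi\cdot g_0\,d|\mu_0|=\int\phi\,d\nu_0$ directly from the assumed weak convergence of the vector measures. The unital algebra generated by these two families separates points of $\Omega\times S^{n-1}$ and vanishes at infinity, so by the Stone--Weierstrass theorem for locally compact spaces it is dense in $C_0(\Omega\times S^{n-1})$; together with the uniform mass bound $\nu_j(\Omega\times S^{n-1})=|\mu_j|(\Omega)$, this delivers weak$^*$ convergence throughout $C_0$.

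The main obstacle is that $(x,v)\mapsto h(v)$ is merely bounded continuous and does not vanish as $x$ leaves compact subsets of $\Omega$, so it is not itself a valid test function for weak$^*$ convergence of the $\nu_j$. I would address this with a tightness cutoff: the weak$^*$ convergence of $|\mu_j|$ together with $|\mu_j|(\Omega)\to|\mu_0|(\Omega)<\infty$ implies that for every $\varepsilon>0$ there is a compact $K\subset\Omega$ with $\sup_j|\mu_j|(\Omega\setminus K)<\varepsilon$. Choosing $\chi\in C_c(\Omega)$ with $0\le\chi\le 1$ and $\chi\equiv 1$ on $K$, the product $\chi(x)h(v)$ lies in $C_0(\Omega\times S^{n-1})$, whereas $\bigl|\int(1-\chi(x))h(v)\,d\nu_j\bigr|\le \|h\|_\infty\varepsilon$ uniformly in $j$ and similarly under $\nu_0$. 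Applying the already-established weak$^*$ convergence to $\chi(x)h(v)$, then passing to $j\to\infty$ and finally $\varepsilon\to 0$, completes the proof.
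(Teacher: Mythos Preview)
The paper does not prove this lemma; it simply quotes it from \cite[Proposition~2.39]{AFP00}. Your approach via the push-forwards $\nu_j$ on $\Omega\times S^{n-1}$ is exactly the Young-measure argument used there, so the strategy is correct.

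There is, however, a real gap at the Stone--Weierstrass step. You verify convergence of $\int\phi\,d\nu_j$ only for the two families of \emph{generators} $\phi_1(x)$ and $\psi(x)\cdot v$, and then appeal to density of the algebra they generate. But weak$^*$ convergence on a set of generators does not propagate to products: for instance $\int(\psi(x)\cdot v)^2\,d\nu_j$ involves quadratic expressions in $g_j$ which are not controlled by the weak convergence of $\mu_j$ and $|\mu_j|$. Equivalently, the \emph{linear span} of your two families consists only of functions that are affine in $v$; since distinct probability measures on $S^{n-1}$ can share the same barycenter, this span neither is dense in $C_0(\Omega\times S^{n-1})$ nor determines a measure on $\Omega\times S^{n-1}$.

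The standard repair (which is what \cite{AFP00} does) is to argue via weak$^*$ compactness. The $\nu_j$ have bounded mass, so any subsequence has a weak$^*$ limit $\nu$. Your two families of test functions then show that the $\Omega$-marginal of $\nu$ is $|\mu_0|$ and that, disintegrating $\nu=\int_\Omega\nu_x\,d|\mu_0|(x)$, one has $\int_{S^{n-1}}v\,d\nu_x(v)=g_0(x)$ for $|\mu_0|$-a.e.\ $x$. The decisive geometric fact is that a probability measure on $S^{n-1}$ whose barycenter has unit norm must be the Dirac mass at that barycenter (by strict convexity of the Euclidean ball). Hence $\nu_x=\delta_{g_0(x)}$, so $\nu=\nu_0$, and since every subsequential limit equals $\nu_0$ the whole sequence converges. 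Your final tightness cutoff, passing from $C_0$ to bounded continuous test functions, is fine as written.
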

\medskip

The {\it perimeter} of a measurable set $A\subseteq\R^n$ in an open set $\Omega$ is defined as
$$P(A,\Omega)=V(\1_A,\Omega).$$
If the last quantity is finite, we say that $A$ has {\it finite
  perimeter in $\Omega$}. Sets $A$ with 
$P(A,\R^n)<\infty$ are simply called sets of {\it finite   perimeter}. This class is closed under set complement operation: a Borel set $A$ has finite perimeter if and only if its complement has finite perimeter.
In all the above notions, we  
skip from now on the argument $\Omega$ if $\Omega=\R^n$. If $A$ has finite volume, $\1_A$ is in $L^1(\R^n)$ and thus $A$ has finite perimeter if and only if $\1_A\in \BV$. 

If the perimeter
of a set $A$ is finite, it is the variation  of $D(\1_A)$ on $\Omega$. This variation  measure  can  in turn be expressed by means of the $(n-1)$-dimensional Hausdorff
measure. To do so, let the \emph{reduced boundary} $\cF A$  be the set of all points $x\in \Omega$ in the support of  $|D(\1_A)|$ such that the limit 
\[
\nu_A(x)=-\lim_{\rho\to 0_+} \frac{D(\1_A)(B(0,\rho))}{|D(\1_A)|(B(0,\rho))}
\]
exists in $\R^{n}$ and is a unit vector. Here and in the following, $B(x,\rho)$ denotes the Euclidean ball with radius $\rho\ge 0$ centered at $x\in\R^n$.
 The negative sign in this definition is included here, so that the  function   $\nu_A:\cF A\to S^{n-1}$ can be interpreted as \emph{generalized outer normal to  $A$}. By the Besicovitch derivation theorem \cite[Theorem 2.22]{AFP00}, $|D(\1_A)|$  is concentrated 
on $\cF A$, and $D(\1_A)=-\nu_A |D(\1_A)|$. 
A comparison with the polar decomposition \eqref{polar} yields $-\nu_A(x)=\Delta_{\1_A}(x)$ for $|D(\1_A)|$-almost every $x$. 
De Giorgi has shown that $\cF A$ is countably $(n-1)$-rectifiable and  $|D(\1_A)|=\cH^{n-1}\restrict \cF A$, see, for instance \cite[Theorem 3.59]{AFP00}.

If $A\subset\R^n$ has finite perimeter and $u\in\R^n$, let 
$\cF_{u+}A$, $\cF_{u-}A$ denote the set of all points $x\in\cF A$ such that $\nu_A(x)\cdot u$ is positive or negative, respectively. When $u\not=0$, these sets are connected to the positive and negative parts of the measure $D_u\1_A$ as follows:
\begin{align}
(D_u\1_A)^+(B)&=\int_{B\cap\cF_{u-}A}|u\cdot\nu_A(x)|\, \cH^{n-1}(dx),\nonumber\\
(D_u\1_A)^-(B)&=\int_{B\cap\cF_{u+}A}|u\cdot\nu_A(x)|\, \cH^{n-1}(dx),  \label{Du-}
\end{align}
where $B$ is any bounded Borel subset of $\R^n$.

It is sometimes convenient to replace $\cF A$ with larger sets, that are easier to handle. Let $\partial^*A=\R^n\setminus (A^0\cup A^1)$ be the 
\emph{essential boundary} of $A$, where 
\begin{align}\label{def:Lebesgue}
   A^t:=\left\{x\in \R^n: \lim_{r\to 0+} \frac{\lambda_n(A\cap
       B(x,r))}{\lambda_n(B(x,r))}=t\right\}
\end{align}
is the set of all points with \emph{Lebesgue  density $t\in [0,1]$}. 
Then we have $\cF A\subset \partial^* A$ (\cite[Theorem~3.61]{AFP00}). If $A$ is a set of finite perimeter in $\Omega$,
it can be shown that  $\cH^{n-1}(\Omega\cap \partial^{*}A\setminus \cF A)=0$, see \cite[Theorem 3.61]{AFP00}, and  thus we have 
\begin{equation}\label{obsolete}
    |D(\1_A)|=\cH^{n-1}\restrict \cF A=\cH^{n-1}\restrict \partial^* A
\end{equation}
on $\Omega$, and, in particular,
\begin{align}\label{eq:pa}
P(A,\Omega)=\cH^{n-1}(\cF A\cap \Omega)=
\cH^{n-1}( \partial^*  A\cap \Omega).
\end{align}
When $\Omega=\R^n$, the \emph{generalized surface area measure} of $A$, as defined in the introduction, can therefore also be written as 
\begin{equation}\label{eq:surfArea}
S_{n-1}^*(A;\cdot)=\cH^{n-1}\left(\{a\in\cF A:\nu_A(a)\in\cdot\}\right).
\end{equation}

\begin{remark} \rm
As functions with bounded variation, sets with finite perimeter are considered not as individual sets in $\R^n$, but as equivalence classes $\1_A\in L^1$. Thus, two sets with finite perimeter are considered as identical if the Lebesgue measure of their symmetric difference vanishes.
\end{remark} 

\section{The variation with respect to a compact set} \label{sec:Vq}
The \emph{support  function} $h(Q,\cdot)$ 
of a non-empty compact set  $Q$ in $\R^n$  is defined as the 
(usual) support function of its convex hull $\conv Q$. Explicitly, we have
\[
  h(Q,u)=\max\{ u\cdot x: x\in Q\},\qquad u\in S^{n-1}. 
\]
If $x^+=\max\{x,0\}$ denotes  the positive part of
$x\in \R$,  $h(Q\cup \{0\},\cdot)=h(Q,\cdot)^+$.  
Properties and applications of  the support function of convex sets
can be found in \cite{sch93}. We only mention here that the
\emph{mean width} $b(K)$ 
of a non-empty compact convex  set $K\subset \R^n$
can be defined using its support function:  
\[
   b(K)=\frac{2}{n\kappa_n}\int_{S^{n-1}} h(K,u) du.
\]
For an open set $\Omega\subset \R^n$ and $f\in \BV(\Omega)$
 with polar decomposition \eqref{polar}, we define a functional 
\[
V^Q(f, \Omega)=\int_{\Omega}h(Q,\Delta_f(x))^+ \,|Df|(dx)
\]
and call it the \emph{variation of $f$ with respect to $Q$ {in $\Omega$}}. As $V^Q(f{, \Omega})=V^{\conv (Q\cup\{0\})}(f{, \Omega})$, 
this variation depends on $Q$ only through the convex hull of $Q\cup\{0\}$.
We follow our usual convention and write $V^Q(f)=V^Q(f, \R^n)$. If this definition is applied to the  indicator function of a set $A\subset\R^n$ of finite perimeter with $\Omega=\R^n$, \eqref{obsolete} and \eqref{eq:surfArea} give 
\begin{align}\label{mixedvol}
	V^Q(\1_A)=\int_{S^{n-1}} h(-Q\cup\{0\},u)S_{n-1}^*(A;du).
\end{align}
If $A$ is a convex body, $V^Q(\1_A)=nV(-Q\cup\{0\},A,\ldots,A)$ is a mixed  volume, so $V^Q(\1_A)$ generalizes certain mixed volumes to sets of finite perimeter. 
If $\conv Q$ is symmetric w.r.t.\ the origin then $V^Q(f)$ is a special case of the generalized (anisotropic) variation defined in \cite{AB:94}. Indeed, we have $V^Q(f)=|Df|_{\phi}(\R^n)$ with Finsler metric $\phi(x,v)=h_{Q\cup\{0\}}(v)$, $x\in\R^n$, $v\in\R^n\setminus\{0\}$, in the sense of \cite[Definition~3.1]{AB:94}.

Let $B_L=B(0,1)\cap L$ be the unit ball in $L$.
One motivation to call $V^Q(f)$ a ``variation'' comes from the fact that 
\begin{align}\label{eqB_L}
     V^{B_L}(f,\Omega)=V_L(f,\Omega),
\end{align}
which follows directly from the definitions as $h(B_L,\cdot)=|p_L|$.  
In particular, we have $V^{\{-u,u\}}(f,\Omega)=V_u(f,\Omega)$ whenever $u\in S^{n-1}$. 
Another motivation is that averaging $Q$-variations gives the usual variation, that is,
\[
\int_{SO(n)} V^{\vartheta Q}(f,\Omega)\, d\vartheta=c_Q\, V(f,\Omega).
\]
where $c_Q=(1/2)b\left(\conv (Q\cup\{0\})\right)$. This follows directly from the definitions and an application of Fubini's theorem. 
We now summarize connections and basic inequalities
between the variation with respect to $Q$ and the $L$-variations when $\Omega=\R^n$. 

\begin{lemma}[Ordinary variation and variation with respect to $Q$]
\label{lem:VqV}
 Let  $f\in \BV$ and a non-empty compact set 
 $Q\subset \R^n$ be given. Then the following statements hold.
\begin{enumerate}
\item[\rm (a)] For $u\in S^{n-1}$ we have
\[
   2 V^{\{u\}}(f)=V_u(f). 
\]
\item[\rm (b)]  $V^Q(f)\ge s V_L(f)$, where $L=\Span Q$ and $s$ is the
  relative inradius of $\conv(Q\cup \{0\})$ in $L$ {(i.e., $s$ is the maximum radius of a ball in $L$ contained in $Q$)}.
\item[\rm (c)]  $V^Q(f)\le {R\, V_L(f) \le}R \, V(f)$, where $R$ denotes the 
          circumradius of $\conv(Q\cup \{0\})$, that is the radius of
          the unique smallest ball containing this set.  
\end{enumerate}
\end{lemma}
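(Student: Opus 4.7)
For part (a), the core identity is $|t|=2t^+-t$. Applied pointwise with $t=u\cdot\Delta_f(x)$ it gives
\[
  V_u(f)=\int|u\cdot\Delta_f(x)|\,|Df|(dx)=2\int\bigl(u\cdot\Delta_f(x)\bigr)^+|Df|(dx)-\int u\cdot\Delta_f(x)\,|Df|(dx).
\]
The first summand is $2V^{\{u\}}(f)$ since $h(\{u\},v)=u\cdot v$, and the second summand vanishes by Lemma~\ref{lem:lin}. This is essentially a one-line computation.

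For part (b), I would start by choosing $c\in L$ such that $B(c,s)\cap(c+L)\subseteq\conv(Q\cup\{0\})$, which is possible because the relative inradius is attained. Monotonicity of the support function and a short calculation give the pointwise lower bound
\[
  h(Q,v)^+=h(\conv(Q\cup\{0\}),v)\ge c\cdot v+s\,|p_L v|,\qquad v\in\R^n.
\]
Integrating this against $|Df|$ with $v=\Delta_f(x)$ and using Lemma~\ref{lem:lin} to kill the linear term $\int c\cdot\Delta_f\,d|Df|=c\cdot Df(\R^n)=0$, the remaining integral equals $s\,V_L(f)$ by the formula \eqref{E-VL} in Proposition~\ref{propDirVar}(a).

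Part (c) is analogous with the roles of in- and circumradius reversed. Let $c\in\R^n$ be the center of the circumscribed ball of $\conv(Q\cup\{0\})$; since $\conv(Q\cup\{0\})\subset L$ spans $L$ (after reducing to this case), the center in fact lies in $L$. This yields the pointwise bound
\[
  h(Q,v)^+=h(\conv(Q\cup\{0\}),v)\le h(B(c,R)\cap(c+L),v)=c\cdot v+R\,|p_Lv|.
\]
One must check that the right-hand side is nonnegative (so the inequality does not degenerate against the nonnegative left-hand side); this follows from $0\in B(c,R)$, which forces $|c|\le R$, combined with $c\cdot v=c\cdot p_Lv\ge-|c|\,|p_Lv|$. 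Integrating and again applying Lemma~\ref{lem:lin} gives $V^Q(f)\le R\,V_L(f)$. The final inequality $V_L(f)\le V(f)$ is immediate from \eqref{E-VL} and $|p_L\Delta_f|\le|\Delta_f|=1$ holding $|Df|$-a.e.

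The only mildly delicate point is recognizing that the centers of the in- and circumscribed balls can be taken in $L$ (so that $p_L v$ rather than the full vector $v$ appears) and that the shift by $c$ contributes nothing upon integration thanks to $Df(\R^n)=0$; everything else reduces to monotonicity of $h(\cdot,v)$ and the integral representation of $V_L(f)$.
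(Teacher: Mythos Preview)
Your proof is correct and follows essentially the same route as the paper: for each part you compare support functions pointwise (via the identity $|t|=2t^+-t$ in (a), via the inscribed and circumscribed $L$-balls in (b) and (c)), integrate against $|Df|$, and invoke $Df(\R^n)=0$ from Lemma~\ref{lem:lin} (the paper's \eqref{centerCond}) to eliminate the linear shift term. The paper is terser---it dispatches (a) in one line and leaves (c) as ``analogous''---but your added detail (that the circumcenter lies in $L$, and the nonnegativity check in (c)) is correct, if not strictly necessary since $h(\conv(Q\cup\{0\}),\cdot)$ is already nonnegative.
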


\begin{proof} The claim in (a) follows from the
definitions of $V_u(f)$ and $V^Q(f)$, in combination with \eqref{centerCond}. 
To verify (b), 
let $B_L(y,s)$ be a ball in $L$ included in $K:=\conv(\{0\}\cup Q)$. From the basic properties of support functions we get for $u\in S^{n-1}$ 
\begin{eqnarray*}
(h(Q,u))^+&=&h(K,u)=h(K,p_Lu)\\
&\geq& h(B(y,s),p_Lu)=y\cdot p_Lu+ s |p_L(u)|=y\cdot u+ s |p_L(u)|.
\end{eqnarray*}
Setting $u=\Delta_f(x)$ and integrating w.r.t.\ $|Df|$, 
equations  \eqref{centerCond} and \eqref{E-VL} imply  
\[
V^Q(f)\geq s V_L(f),
\]
as required. The proof of assertion (c) is analogous.
\end{proof}

For a non-empty compact set $Q\subset \R^n$ we define the \emph{$Q$-variation measure} $|\mu|_Q$ of the $\R^n$-valued Radon measure $\mu$ on the open set $\Omega\subset \R^k$ by 
\[
|\mu|_Q(A)=\sup\left\{\sum_{h=0}^\infty h(Q,\mu(E_h))^+: (E_1,E_2,\ldots) \text{
	forms a partition of } A\right\}
\]
for any Borel set $A\subset \Omega$. Using the subadditivity of the support function, it is easy to show that $|\mu|_Q$ is a positive Radon measure; one can for instance adapt the proof of \cite[Theorem 1.6]{AFP00} and observe that $Q\subset B(0,r)$ implies $|\mu|_Q\le |\mu|_{B(0,r)}=r|\mu|$ to prove finiteness on compact sets. 
The identity \eqref{eqB_L} shows that the following Proposition contains  Proposition \ref{propDirVar} as special case. 

\begin{proposition}[Basic properties of the variation with respect to $Q$]\mbox{}\\ \label{prop:VQ}
Let $Q\subset \R^n$ be non-empty and compact. 
\begin{enumerate}
\item[\rm (a)] 
 For $f\in \BV(\Omega)$ we have $V^{Q}(f,\Omega)=|Df|_Q(\Omega)$.  If, in addition, $f\in C^1(\Omega)$, 
 then 
\begin{align}\label{claimA} 
V^Q(f, \Omega)=\int_{\Omega} h\left(Q,\nabla f(x)\right)^+ dx.
\end{align}
\item[\rm (b)]  Assume that $\Omega=\R^n$ or that the origin is a relative interior point of $\conv Q$.  If $f_j\to f$ in $L^1(\Omega)$ then $V^Q(f,\Omega)\leq\liminf_{j\to\infty} V^Q(f_j,\Omega)$.
\item[\rm (c)]  If $(f_j)$ is a sequence converging strictly to $f\in\BV(\Omega)$, then  $V^Q(f_j,\Omega)\to V^Q(f,\Omega)$ as $j\to \infty$.
\end{enumerate}
\end{proposition}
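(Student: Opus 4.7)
For (a), I would first show the identity $V^Q(f,\Omega)=|Df|_Q(\Omega)$. Setting $K=\conv(Q\cup\{0\})$, note that $h(Q,\cdot)^+=h(K,\cdot)\ge 0$ is sublinear, hence convex. For any Borel partition $(E_h)$ of $\Omega$, Jensen's inequality applied on each $E_h$ with the probability measure $|Df|(E_h)^{-1}|Df|\restrict E_h$, combined with the positive homogeneity of $h(K,\cdot)$, yields
\[
h(K,Df(E_h))\le\int_{E_h}h(K,\Delta_f(x))\,|Df|(dx),
\]
and summation gives $|Df|_Q(\Omega)\le V^Q(f,\Omega)$. For the reverse, I would approximate the measurable map $\Delta_f:\Omega\to S^{n-1}$ by a simple function $\sum_i u_i\1_{F_i}$ with $|\Delta_f-u_i|<\varepsilon$ on each $F_i$. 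Since $h(K,\cdot)$ is Lipschitz with constant equal to the circumradius of $K$, the partition $(F_i)$ satisfies $\sum_i h(K,Df(F_i))\ge V^Q(f,\Omega)-C\varepsilon$, so $|Df|_Q(\Omega)\ge V^Q(f,\Omega)$. Formula \eqref{claimA} for $C^1$ functions then follows from $Df=\nabla f\,dx$, $|Df|=|\nabla f|\,dx$, $\Delta_f=\nabla f/|\nabla f|$ (where nonzero), and positive homogeneity of $h(Q,\cdot)^+$.

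For (c), I would invoke Reshetnyak's continuity theorem directly. Strict convergence $f_j\to f$ implies, by \cite[Proposition~3.13]{AFP00}, that $Df_j\to Df$ weakly as vector-valued Radon measures in $\Omega$ and that $|Df_j|(\Omega)\to|Df|(\Omega)$. Lemma~\ref{L-Resh} applied to the continuous bounded function $v\mapsto h(Q,v)^+$ on $S^{n-1}$ (recall $\Delta_{f_j}$ is the Radon--Nikod\'ym density of $Df_j$ w.r.t.\ $|Df_j|$) then yields $V^Q(f_j,\Omega)\to V^Q(f,\Omega)$.

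The main obstacle is (b), which I would treat via duality. The goal is to establish the representation
\[
V^Q(f,\Omega)=\sup\Bigl\{-\int_\Omega f(x)\,\Div g(x)\,dx:\ g\in C^\infty_c(\Omega,\R^n),\ g(x)\in K\text{ for all }x\Bigr\},
\]
from which the claim follows at once, since the right-hand side is a supremum of $L^1$-continuous linear functionals in $f$ and hence $L^1$-lower semicontinuous. The inequality ``$V^Q(f,\Omega)\ge\sup$'' is immediate: $g(x)\in K$ gives $g(x)\cdot\Delta_f(x)\le h(K,\Delta_f(x))=h(Q,\Delta_f(x))^+$, which after integration by parts yields $-\int f\Div g\,dx\le V^Q(f,\Omega)$. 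The harder direction starts from a Borel measurable selector $y^*:S^{n-1}\to K$ with $y^*(v)\cdot v=h(K,v)$, so that $\tilde g=y^*\circ\Delta_f$ achieves $\int\tilde g\cdot\Delta_f\,d|Df|=V^Q(f,\Omega)$. I would approximate $\tilde g$ in $L^1(|Df|)$ successively by a simple $K$-valued function, a continuous $K$-valued function obtained from a partition of unity, a $C^\infty$ $K$-valued mollification (convexity of $K$ is preserved in each step), and finally a compactly supported cutoff $g=\chi\,\tilde g_\varepsilon$ with $\chi\in C^\infty_c(\Omega,[0,1])$; this stays in $K$ because $\chi g_\varepsilon+(1-\chi)\cdot 0\in K$ when $0\in K$. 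The role of the hypothesis ``$\Omega=\R^n$ or $0\in\mathrm{relint}\,K$'' is to guarantee that each step incurs only negligible loss in the target integral: when $\Omega=\R^n$, finiteness of $|Df|$ on $\R^n$ controls the cutoff error, while when $0\in\mathrm{relint}\,K$, a slight scaling toward $0$ keeps the approximations comfortably inside $K$ despite mollification and cutoff near $\partial\Omega$. This four-step density argument under the state-space constraint $g(x)\in K$ is the technically most delicate point of the whole proposition.
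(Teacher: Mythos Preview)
Your treatment of (a) and (c) is correct and matches the paper's. For (a) the paper proves the same density identity $|\mu|_Q(B)=\int_B h(K,g)\,d\nu$, constructing the reverse-inequality partition from a dense sequence $(z_h)$ in $\conv Q$ rather than from a simple-function approximation of $\Delta_f$, but the two devices are interchangeable. For (c) the paper invokes Lemma~\ref{L-Resh} exactly as you do.

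For (b) your route is genuinely different. The paper does \emph{not} prove a dual sup-representation. Instead it passes to a subsequence along which $\lim_j V^Q(f_j,\Omega)$ exists and is finite, and then uses the hypothesis to bound $V_L(f_j,\Omega)=|p_L(Df_j)|(\Omega)$ uniformly in $j$: via Lemma~\ref{lem:VqV}(b) when $\Omega=\R^n$, or via the inclusion $sB_L\subset\conv Q$ when $0$ is a relative interior point. This yields relative weak* compactness of the projected measures $\mu_j=p_L(Df_j)$; one checks that their weak* limit is $p_L(Df)$, and then Reshetnyak's lower-semicontinuity theorem \cite[Theorem~2.38]{AFP00}, applied with the positively $1$-homogeneous integrand $h(\conv(Q\cup\{0\}),\cdot)$, gives the inequality. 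So in the paper's argument the hypothesis is essential, precisely to obtain the total-variation bound on the $\mu_j$ needed for weak* compactness.

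Your duality argument is correct, but your account of where the hypothesis enters is off. The cutoff error is controlled by the finiteness of $|Df|(\Omega)$, which holds for \emph{every} $f\in\BV(\Omega)$, not only when $\Omega=\R^n$; and mollifying a $K$-valued function (extended by $0\in K$ outside $\Omega$) already stays in $K$ by convexity, with no scaling toward the origin required. In fact your four approximation steps go through for arbitrary open $\Omega$ and arbitrary compact convex $K\ni 0$, so the sup-representation --- and hence (b) --- actually holds without the extra hypothesis. The trade-off: the paper's proof is shorter because it outsources the work to the cited Reshetnyak theorem, while yours is self-contained, gives an independent characterization of $V^Q$, and incidentally delivers a slightly stronger statement than the paper claims.
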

\begin{proof}
In order to prove that $V^{Q}(f,\Omega)=|Df|_Q(\Omega)$ in (a), it is enough to show that if an $\R^n$-valued finite measure $\mu$ 
has density $g$ with respect to a positive measure $\nu$, then $|\mu|_Q$ has density $h(Q,g(\cdot))^+$ with respect to $\nu$, and apply this to $\mu=Df$, $\nu=|Df|$. With this notation, and exploiting that we may assume $0\in Q$, we have to prove 
\begin{equation}\label{eqDensity}
|\mu|_Q(B)=\int_B h(Q,g(x))\, \nu(dx)
\end{equation}
for all measurable sets $B$. The inequality $|\mu|_Q(B)\le\int_B h(Q,g) d\nu$ follows from the convexity, positive homogeneity and continuity of $h(Q,\cdot)$. To show the reverse inequality let $\varepsilon>0$ and choose a dense sequence $(z_h)$ in
$\conv Q$. Define 
\[
\sigma(x)=\min\{h\in {\mathbb N}: z_h\cdot g(x)\ge (1-\varepsilon)h(Q,g(x))\}, 
\]
and  the level sets $B_h=\sigma^{-1}(h)\cap B$, that form a partition of $B$. Then 
\begin{align*}
(1-\varepsilon)\int_B h(Q,g) d\nu&=\sum_h\int_{B_h} (1-\varepsilon)h(Q,g) d\nu
\le \sum_h\int_{B_h} z_h\cdot g(x)  d\nu\\&=\sum_h z_h\cdot \mu(B_h)\le \sum_h h(Q,\mu(B_h))
\le |\mu|_Q(B), 
\end{align*}
yielding \eqref{eqDensity}. If $f$ is also in $C^1(\Omega)$,  
$Df$ has Lebesgue-density $\nabla f$ and \eqref{eqDensity} with $\mu=Df$, $g=\nabla f$ and Lebesgue measure $\nu$ yields the second claim in (a). 

Let us show (b). 
We may assume that $\liminf_jV^Q(f_j,{\Omega})<\infty$, and
pass to a subsequence (again denoted by $(f_j)$) for which
$\lim_{j\to\infty}V^Q(f_j,{\Omega})<\infty$ exists. Set $L:=\Span Q$.  Except the trivial case $Q=\{0\}$, we always have $\dim L\geq 1$. 
If $\Omega=\R^n$ let $s>0$ be the inradius of $\conv(\{0\}\cup
Q)$ in $L$. Then, 
\begin{align}\label{eqbound}
V_L(f_j,\Omega)\le \frac 1s V^Q(f_j,\Omega)
\end{align}
 due to  Lemma~\ref{lem:VqV}.(b). If the origin is a relative interior point of $\conv Q$, there is $s>0$ such that 
  $sB_L\subset \conv Q$ and hence $V^Q(f_j,\Omega)\ge V^{sB_L}(f_j,\Omega)=sV^{B_L}(f_j,\Omega)=sV_L(f_j,\Omega)$, implying again \eqref{eqbound}. In either case, the sequence $V_L(f_j,\Omega)$ is bounded. 
Hence, by Proposition \ref{propDirVar}.(a),  $\mu_j:=p_L(Df_j)$, $j=1,2,3,\ldots$, are $L$-valued finite vector measures. We can show exactly as in the proof of \cite[Proposition~3.13]{AFP00} that $\mu_j\to\mu=p_L(Df)$ weakly* (we use the relative weak* compactness of $(\mu_j)$ and verify that any cumulative point of $(\mu_j)$ must agree with $p_L(Df)$). 
Note that the measures $\mu_j$, and $\mu$ 
have polar decompositions \eqref{polar}
$$d\mu_j=\frac{p_L\Delta_{f_j}}{|p_L\Delta_{f_j}|}\,d|\mu_j|,
\quad \mu=\frac{p_L\Delta_{f}}{|p_L\Delta_{f}|}\, d|\mu|,$$
and we can write
$$V^Q(f,\Omega)=\int_{\Omega} h\left(\conv(\{0\}\cup Q),\frac{p_L\Delta_{f}}{|p_L\Delta_{f}|}\right)\, d|\mu|,$$
and analogously with $f_j$ and $\mu_j$. Since the support function $h(\conv(\{0\}\cup Q),\cdot)$ is continuous and positively $1$-homogeneous, we may apply the Reshetnyak lower semi-continuity theorem \cite[Theorem~2.38]{AFP00} and we obtain $V^Q(f,{\Omega})\leq\liminf_jV^Q(f_j,{\Omega})$, as requested.

Assertion (c) follows directly from Lemma~\ref{L-Resh} with 
$h=h(\conv(\{0\}\cup Q),\cdot)$.
\end{proof}

\begin{remark} \rm If $\conv Q$ is symmetric w.r.t.\ the origin then assertion (b) of Proposition~\ref{prop:VQ} follows from \cite[Theorem~5.1]{AB:94}.
\end{remark}

\section{Dilation volumes} \label{s:dv}
Let 
$A\oplus  Q=\{a+q:a\in A,q\in Q\}$ be  the \emph{Minkowski sum} of the
sets $A$ and $Q$ in $\R^n$. For measurable $A$ and compact $Q\ne
\emptyset$ 
we are
interested in the volume 
\[
  \lambda_n((A\oplus Q)\setminus A)=\int_{\R^n} \left( \max_{u \in Q}
  \1_{A+u}(x)-\1_A(x)\right)^+ dx, 
\]
and therefore define more generally the functional 
\begin{align}\label{eqDefG}
  G(Q,f)= \int_{\R^n} \left(\sup_{u \in Q} f(x-u)-f(x)\right)^+ dx
\end{align}
for any measurable function $f$ on $\R^n$. Note that the family
$\{f(\cdot-u):u\in Q\}$ is a permissible class, and thus, 
$\sup_{u \in Q} f(\cdot-u)$ is Lebesgue-measurable; see e.g.~\cite[Appendix
C]{poll84} for a short summary or \cite[Section III]{DM:78} for
details.   
By definition, 
\begin{align}\label{eq:Ggeometric}
G(Q,\1_A)= 
  \lambda_n((A\oplus Q)\setminus A). 
\end{align}

Note that the mapping $f\mapsto G(Q,f)$ may depend  in 
general on the particular representation $f$ and, hence, 
cannot be considered as a mapping on $L^1$. When $Q$ is at most countable, 
independence of the representative is straightforward.

\begin{lemma}[Properties of $G(Q,\cdot)$ for countable $Q$] \label{G_smooth}
If {the compact set} $Q\ne \emptyset$ is at most countable then the mapping $G(Q,\cdot)$ 
is well-defined and lower semi-continuous on $L^1$.
Moreover, if $f_j=f* \rho_j$ is a mollification of $f\in L^1$ with
non-negative  $\rho$, then 
\begin{align}\label{GG}
G(Q,f_j)\le G(Q,f)
\end{align}
and thus 
$G(Q,f_j)\to G(Q,f)$, as $j\to \infty$. 
\end{lemma}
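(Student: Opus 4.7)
The plan is to verify the three assertions—well-definedness, lower semi-continuity, and the mollification bound—in sequence, with countability of $Q$ playing the decisive role throughout by ensuring that every supremum is measurable and stable under modification of $f$ on a Lebesgue null set. Concretely, writing $Q=\{u_1,u_2,\ldots\}$, the integrand in \eqref{eqDefG} features the countable supremum $\sup_k f(\cdot-u_k)$, which is measurable, and which is altered only on the null set $\bigcup_k(N+u_k)$ whenever $f$ is changed on a null set $N$. This settles well-definedness of $G(Q,\cdot)$ on $L^1$-classes.

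For lower semi-continuity on $L^1$, I would use the standard subsequence trick: given $f_j\to f$ in $L^1$, it suffices to show $G(Q,f)\le L$ whenever a subsequence realizes $G(Q,f_{j_k})\to L$. Passing to a further subsequence converging pointwise a.e., and invoking both the elementary $\sup_u\liminf_k g_k\le\liminf_k\sup_u g_k$ and continuity of $x\mapsto x^+$, one obtains
\[
\bigl(\sup_{u\in Q} f(x-u)-f(x)\bigr)^+\le\liminf_{k\to\infty}\bigl(\sup_{u\in Q} f_{j_k}(x-u)-f_{j_k}(x)\bigr)^+
\]
almost everywhere; Fatou's lemma then delivers $G(Q,f)\le L$.

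For the mollification bound \eqref{GG}, the idea is a pointwise estimate followed by Fubini. Starting from
\[
(f*\rho_j)(x-u)-(f*\rho_j)(x)=\int\bigl[f(x-u-y)-f(x-y)\bigr]\rho_j(y)\,dy,
\]
using $\sup_u\int g_u\,d\mu\le\int\sup_u g_u\,d\mu$ for the non-negative measure $\rho_j\,dy$, and dominating the integrand by its positive part, one gets
\[
\bigl(\sup_u(f*\rho_j)(x-u)-(f*\rho_j)(x)\bigr)^+\le\int\bigl(\sup_u f(x-u-y)-f(x-y)\bigr)^+\rho_j(y)\,dy.
\]
Integrating in $x$, exchanging with the $y$-integral via Tonelli, and applying translation invariance of Lebesgue measure to eliminate the shift $y$ in the inner integral then yields $G(Q,f*\rho_j)\le G(Q,f)\int\rho_j=G(Q,f)$. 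Combining \eqref{GG} with the lower semi-continuity above (applicable since $f*\rho_j\to f$ in $L^1$) gives $G(Q,f)\le\liminf_jG(Q,f_j)\le\limsup_jG(Q,f_j)\le G(Q,f)$, forcing $G(Q,f_j)\to G(Q,f)$.

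The only point potentially obstructing the argument—trivialized here precisely by countability of $Q$—is measurability of the suprema involved, both $\sup_{u\in Q}f(\cdot-u)$ and $y\mapsto\sup_{u\in Q}[f(x-u-y)-f(x-y)]$. Without countability these suprema need not be measurable, which would block even the definition of $G(Q,f)$ on $L^1$-classes as well as the swap of $\sup_u$ with the integral, explaining the restriction of the lemma to at most countable $Q$.
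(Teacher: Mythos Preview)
Your proposal is correct and follows essentially the same route as the paper: the same null-set argument for well-definedness, the same subsequence/Fatou argument for lower semi-continuity, and the same core inequality $\sup_u\int g_u\,d\mu\le\int\sup_u g_u\,d\mu$ (with $\mu=\rho_j\,dy$) for the mollification bound. The only cosmetic difference is that the paper first replaces $Q$ by $Q\cup\{0\}$ to drop the positive part and then argues via $\|g*\rho_j\|_1\le\|g\|_1$, whereas you keep the positive part explicitly and use Tonelli plus translation invariance; these are equivalent presentations of the same estimate.
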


\begin{proof}
For integrable $f$, let $f^Q(x)=\sup_{u\in Q} f(x-u)$. If $g$ is
another representative of the $L^1$-equivalence class of $f$, then
$f=g$ outside a set $N$ of Lebesgue measure zero. 
Then, $f^Q=g^Q$ outside the set $N\oplus Q$, the latter being a
Lebesgue-null set as $Q$ is at most countable. Hence $G(Q,\cdot)$ is
well-defined on $L^1$.

 To show the semi-continuity, let $(f_j)$ be a sequence that converges to $f$ in $L^1$. This implies that $(f_j)$ converges in measure and if we consider a subsequence of $(f_j)$ such that the limit inferior (of $(G(Q,f_j))$) becomes an ordinary limit, there is a sub-subsequence $(f_{j'})$ that converges outside a Lebesgue-null set $N$. 
 	As $Q$ is at most countable, $M=N\oplus (Q\cup\{0\})$ is a Lebesgue-null set, and  we have that $\lim_{j\to \infty} f_{j'}(x-u)=f(x-u)$ for all $u\in Q\cup\{0\}$ and  $x\not\in M$. Fatou's lemma and the lower semi-continuity of the supremum operation now yield
 	\begin{align*}
 	\liminf_{j\to\infty} G(Q,f_j)&\ge \int_{\R^n} \liminf_{j\to\infty}\sup_{u \in Q\cup\{0\}}\left( f_{j'}(x-u)-f_{j'}(x)\right) dx\\
 	&\ge \int_{\R^n\setminus M} \sup_{u \in Q\cup\{0\}}\liminf_{j\to\infty}\left( f_{j'}(x-u)-f_{j'}(x)\right) dx\\
 	&= \int_{\R^n\setminus M} \sup_{u \in Q\cup\{0\}}\left( f(x-u)-f(x)\right) dx
 	         \\
 	         &=G(Q,f).
 	\end{align*}
It remains to prove \eqref{GG}. We may  assume without loss of 
generality that $0\in Q$, as $G(Q,f)=G(Q\cup \{0\},f)$. Then, the positive part can be dropped in
 the definition of $G(Q,f)$.  We have
\begin{eqnarray*}
G(Q,f_j)&=&\|\sup_{u\in Q}(f_j(\cdot -u)-f_j)\|_1\\
&=&\|\sup_{u\in Q}\left[(f(\cdot -u)-f)*\rho_j\right]\|_1\\
&\leq&\left\|\left[\sup_{u\in Q}(f(\cdot -u)-f)\right]*\rho_j\right\|_1\\
&=&\|\sup_{u\in Q}(f(\cdot -u)-f)\|_1\\
&=&G(Q,f).
\end{eqnarray*}
We have used the inequality
\[
\sup_{u\in Q}[g_u*h]\leq [\sup_{u\in Q}g_u]*h
\]
valid for any integrable functions $h\ge 0$ and $g_u, u\in Q$.
\end{proof}

\begin{proposition}   \label{PG}
{  If $f\in C^1\cap BV$,
$Q\subset \R^n$ is non-empty and compact, 
and $r> 0$ then
\begin{equation} \label{GV}  
  \liminf_{r\to 0+} \frac1r G(rQ,f)\ge V^{-Q}(f).
\end{equation}
If $Q$ is in addition at most countable, \eqref{GV}  holds for any $f\in \BV$. 
}
\end{proposition}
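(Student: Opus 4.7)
The plan is to treat the smooth case $f\in C^1\cap\BV$ first by combining a pointwise Taylor expansion with Fatou's lemma, and then to extend the result to general $f\in\BV$ with countable $Q$ by mollification, using Lemma \ref{G_smooth} together with the strict continuity of $V^{-Q}(\cdot)$ from Proposition \ref{prop:VQ}(c).

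For the smooth case, fix $x\in\R^n$. Differentiability of $f$ at $x$ yields
\[f(x-u)-f(x)=-\nabla f(x)\cdot u+o(|u|),\quad |u|\to 0.\]
Since $Q$ is compact, $u\in rQ$ implies $|u|\leq r\max_{v\in Q}|v|$, so the error estimate is uniform in $u\in rQ$. Substituting $u=rw$ and taking the supremum gives
\[\lim_{r\to 0+}\frac{1}{r}\sup_{u\in rQ}\bigl(f(x-u)-f(x)\bigr)=\sup_{w\in Q}\bigl(-\nabla f(x)\cdot w\bigr)=h(-Q,\nabla f(x)).\]
Because $(z/r)^+=z^+/r$ for $r>0$, and the positive part is continuous,
\[\lim_{r\to 0+}\frac{1}{r}\Bigl(\sup_{u\in rQ}(f(x-u)-f(x))\Bigr)^+=h(-Q,\nabla f(x))^+.\]
The integrands in \eqref{eqDefG} are non-negative, so Fatou's lemma applies and gives
\[\liminf_{r\to 0+}\frac{1}{r}G(rQ,f)\geq\int_{\R^n}h(-Q,\nabla f(x))^+\,dx=V^{-Q}(f),\]
where the last equality is Proposition \ref{prop:VQ}(a).

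For the extension to general $f\in\BV$ with $Q$ at most countable, take mollifications $f_j=f*\rho_j$ with a non-negative mollifier $\rho$. Each $f_j$ is in $C^\infty\cap\BV$, and $f_j\to f$ strictly in $\R^n$, so Proposition \ref{prop:VQ}(c) gives $V^{-Q}(f_j)\to V^{-Q}(f)$. Lemma \ref{G_smooth} provides $G(rQ,f_j)\leq G(rQ,f)$ for every $j$ and $r>0$. Applying the smooth case to each $f_j$,
\[\liminf_{r\to 0+}\frac{1}{r}G(rQ,f)\geq\liminf_{r\to 0+}\frac{1}{r}G(rQ,f_j)\geq V^{-Q}(f_j),\]
and letting $j\to\infty$ concludes the proof.

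The main technical obstacle is the smooth case, where one must simultaneously exchange a supremum (over a compact parameter set), a positive part, a division by $r$, and an integration. Compactness of $Q$ is essential to obtain uniformity of the Taylor remainder on $rQ$, and non-negativity of the integrand makes Fatou's lemma available without any dominating function. Once the smooth case is settled, the extension is completely formal: the three ingredients---the smooth lower bound, the monotonicity $G(rQ,f_j)\leq G(rQ,f)$ from Lemma \ref{G_smooth}, and strict continuity of $V^{-Q}$ from Proposition \ref{prop:VQ}(c)---combine without further work.
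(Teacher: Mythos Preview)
Your proof is correct. For the smooth case you take a more direct route than the paper: the paper writes $g_x(r)=\sup_{u\in Q}f(x-ru)-f(x)$, expresses $g_x(r)/r$ as an average of the one-sided derivative $g_x'(\cdot+)$ over $[0,r]$, bounds this derivative from below, and passes to the limit via dominated convergence on $[0,1]$ before applying Fatou; you instead use differentiability of $f$ at $x$ directly to get the pointwise limit $\frac1r\sup_{u\in rQ}(f(x-u)-f(x))\to h(-Q,\nabla f(x))$ (compactness of $Q$ makes the first-order remainder uniform over $u\in rQ$), then apply the positive part and Fatou. Your argument is shorter and avoids the local Lipschitz and right-derivative machinery; the paper's approach gains nothing extra here. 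For the extension to general $f\in\BV$ with countable $Q$, both arguments have the same skeleton---mollify, use $G(rQ,f_j)\le G(rQ,f)$ from Lemma~\ref{G_smooth}, and pass to the limit in $j$---the only difference being that you invoke Proposition~\ref{prop:VQ}(c) (strict continuity, legitimate since mollifications converge strictly on $\R^n$) where the paper uses Proposition~\ref{prop:VQ}(b) (lower semicontinuity). Either suffices.
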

\begin{proof}
Assume first that $f\in C^1\cap \BV$.
Using the function
\[
  g_x(r)=\max_{u\in Q} f(x-ru)-f(x),\quad r\geq 0,
\]
we may write 
\begin{align}\label{latenight}
  \frac1r G(rQ,f)= \int_{\R^n} \left( \frac1r g_x(r)\right)^+ dx.
\end{align}
Fix $x\in\R^n$. As $f$ is Lipschitz in a neighborhood of $x$ with Lipschitz
constant $M_x$, say, $g_x$ is Lipschitz in a neighborhood $V$ of $0$ with constant bounded by 
$M_x\max_{u\in Q}|u|$. Hence $g_x$ is differentiable almost
everywhere in $V$,   
this derivative is essentially bounded uniformly in $V$, and 
\[
g_x(r)=g_x(0)+\int_0^r g'_x(s)ds=r\int_0^1 g'_x(rs)ds.
\]
Inserting this into \eqref{latenight}, and using 
the fact that  $g'_x(rs)$ coincides almost everywhere with the right sided derivative 
$g'_x(rs+)$, this gives
\begin{align}\label{eq:GrQ}
\frac1r G(rQ,f)= \int_{\R^n} \left( \int_0^1 g'_x(rs+)ds\right)^+ dx.
\end{align}

To determine the limit inferior we first fix $x\in\R^n$. For every $r>0$ there  
is some $v_r\in Q$ with 
\[
g_x(r)=f(x-rv_r)-f(x).
\]
Thus, for all $u\in Q$, $f(x-ru){-f(x)}\le f(x-rv_r){-f(x)}$  and division with $r$ and taking the limit $r\to0_+$  yields
\[
  (-\nabla f(x))\cdot u \le  (-\nabla f(x))\cdot v,
\]
for all $u\in Q$, where $v\in Q$ is any accumulation point of a subsequence of $(v_r)$. Hence,  
\begin{align}\label{eq:vGood}
h(Q,-\nabla f(x))=  (-\nabla f(x))\cdot v.
\end{align}
A lower bound for $g_x'(r+)$ is now obtained from

\begin{align*}
g_x'(r+)&=\lim_{s\to 0+}\frac 1s (g_x(r+s)-g_x(r))\\
&\geq\frac 1s\lim_{s\to 0+}\left((f(x-(r+s)v_r)-f(x))-(f(x-rv_r)-f(x))\right)\\
&\ge  (-\nabla f(x-rv_r))\cdot v_r.
\end{align*}
Considering a subsequence of $(v_r)$ such that the limit inferior becomes a limit and is converging to some $v\in Q$, we can take the limit and get from \eqref{eq:vGood} that 
\[
\liminf_{r\to 0+}g_x'(r+)
\ge ( -\nabla f(x))\cdot v=h(Q,-\nabla f(x)). 
\]
As $g_x'(r+)$ is essentially bounded by $M_x\max_{u\in Q}|u|$ in $V$, dominated convergence implies
$$\liminf_{r\to 0+}\int_0^1g_x'(rs+)\, ds\geq h(Q,-\nabla f(x)).$$
 This
can be used in \eqref{eq:GrQ}, after applying Fatou's lemma, to obtain
\[
\liminf_{r\to 0+} \frac1r G(rQ,f)\ge V^{-Q}(f). 
\]
This yields the assertion for continuously differentiable $f$. 
\medskip 

Let now $f$ be a general function of bounded variation, and 
let $f_j=f*\rho_j$ be smooth mollifications 
of $f$ with mollifiers $\rho_j\ge 0$ (cf.\ Section~\ref{sec:prelim}). 
Let $Q$ be 
non-empty and  at most countable. Then inequality \eqref{GV} 
	holds for all $f_j$ and by Lemma~\ref{G_smooth} and Proposition~\ref{prop:VQ}.(b) also for $f$. This completes the proof.
\end{proof}

The arguments in the proof of \cite[Proposition~11]{Gal:10} show that 
	\[
	\lim_{r\to 0_+} \frac 1r G(rQ,f)=V^{-Q}(f)
	\]
	when $Q=\{u\}, u\not=0$, which is a version of \eqref{covariogram_e} for $\BV$ functions $f$.  One might thus expect that the limit inferior in  \eqref{GV}
is indeed an ordinary limit, and equality holds, for at most countable sets $Q$. However, when  $Q$ is 
infinite, this need not be true. In the following we give a counterexample where  $f$ is the indicator function of a compact set of finite perimeter. This example 
is adapted from the known example of a set of positive reach with infinite outer Minkowski content, see e.g.\  
 \cite[pp. 109f]{AFP00}.

\begin{example}  \label{ex:1}  \rm
Let $n\ge2$. 
For every $m\in\bbN$ define the open annulus
\[
R_m=\INt\left(B\left(0,\frac{1}{m}\right)\setminus B\left(0,\frac{1}{m+1}\right) \right), 
\]
and choose a finite set $A_m\subset R_m$ with 
\[
  R_m\subset A_m\oplus B(0,(2^mm)^{-1}).
\]
Let $(r_m)$ be a sequence of positive numbers and set $A=\{0\}\cup \bigcup_{m=1}^\infty \left( A_m\oplus B(0,r_m)\right)$. 
The sequence $(r_m)$ can be chosen in such a way that 
\[
\cH^{n-1}(\partial A)\le \sum_{m=1}^\infty (\#A_m)\cH^{n-1}(\partial B(0,1)) r^{n-1}_m <\infty,
\]
(here we use the assumption $n\ge 2$),  $A_m\oplus B(0,r_m)\subset R_m$, 
and 
\begin{align}\label{eq:smallLambda}
\lambda_n(A_m)\le\sum_{m=1}^\infty (\# A_m) \lambda(B(0,1)) r_m^n<\frac{ \lambda_n(R_m)}2
\end{align}
for all $m\in \bbN$. In particular, $A$ is a compact set of  finite perimeter. In a similar way, choose finite sets 
$Q_m\subset B(0,1/m)$ with $Q_m\oplus B(0,r_m)\supset B(0,1/m)$ for all $m\in\bbN$, and set 
$Q=\{0\}\cup \bigcup_{m=1}^\infty Q_m$. Then $Q$ is a compact countable subset of the unit ball. 
For $0<r<1/2$ let $m$ be such that $2^{-m}<r\le 2^{-m+1}$. Then 
\begin{align*}
(  A \oplus rQ)\setminus A&\supset [(A \oplus rQ)\setminus A]\cap R_m\\
&\supset  \big[\big( A_m\oplus B(0,r_m) \oplus rQ\big)\cap R_m\big]\setminus A_m\\
&\supset \big[\big(A_m\oplus r(Q\oplus B(0,r_m))\big)\cap R_m\big]\setminus A_m\\
&\supset \big[ \big(A_m\oplus B(0,(2^mm)^{-1})\big)\cap R_m\big]\setminus A_m\\
&=R_m\setminus A_m.
\end{align*}
It follows from \eqref{eq:Ggeometric} and \eqref{eq:smallLambda} that there is a constant $c>0$ with 
\[
\frac1{r}G(rQ,\1_A)\ge \frac{\lambda_n(R_m)}{2r}\ge c\frac{(\log_2\frac1r)^{-(n+1)}}{r}\to\infty,
\]
as $r\to 0_+$. In particular, $\frac1{r}G(rQ,\1_A)$ does not converge to $V^{-Q}(\1_A)\le \cH^{n-1}(\partial A)< \infty$ (we use here Lemma~\ref{lem:VqV}(c)).
\end{example}

We will show now that the desired convergence result is true when $f$ is the indicator of a set of finite perimeter and 
$Q$ is finite. This requires some auxiliary lemmas. We recall the notation $\cF_{u+}A$, $\cF_{u-}A$ introduced in Section~\ref{sec:prelim}.

\begin{lemma}   \label{l-aux}
Let $0\neq u\in \R^n$ and $r>0$ be given.
\begin{enumerate}
\item[{\rm (i)}] If $f\in\BV$ and $U\subset\R^n$ is open then
$$\int_U(f(x)-f(x+ru))^+\, dx\leq r V^{\{-u\}}(f,U\oplus(0,ru)).$$
\item[{\rm (ii)}] If $A\subset\R^n$ has finite perimeter then
$$\lambda_n\left(\left\{x\in A:\, x+ru\not\in A,\, [x,x+ru]\cap\cF_{u+} A=\emptyset\right\}\right)=0.$$
\item[{\rm (iii)}]
If $A$ is as in (ii) and $0<s<1$ then
$$\lambda_n\left(\left\{ x\in A:\, x+sru\not\in A,\, x+ru\in A\right\}\right) =o(r),\quad r\to 0.$$
\end{enumerate}
\end{lemma}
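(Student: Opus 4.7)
The three statements can be proved by reducing, in one way or another, to one-dimensional considerations along the direction $u$.

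\emph{Proof of (i).} First treat $f\in C^1\cap\BV$: by the fundamental theorem of calculus,
\[
f(x)-f(x+ru)=-\int_0^r u\cdot\nabla f(x+tu)\,dt,
\]
so $(f(x)-f(x+ru))^+\le \int_0^r(-u\cdot\nabla f(x+tu))^+\,dt$. Integrating over $U$, applying Fubini, substituting $y=x+tu$, and using $U+tu\subset U\oplus(0,ru)$ for $t\in(0,r)$, Proposition~\ref{prop:VQ}(a) identifies the right-hand side with $rV^{\{-u\}}(f,U\oplus(0,ru))$. For general $f\in\BV$, apply the smooth case to mollifications $f_j=f*\rho_j$ with $\rho_j\ge 0$. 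Subadditivity of the support function $h(\{-u\},\cdot)$ gives $V^{\{-u\}}(f_j,W)\le V^{\{-u\}}(f,W_\varepsilon)$, where $W_\varepsilon$ is the $\varepsilon$-enlargement of $W$ and $\varepsilon$ is the radius of $\spt\rho_j$; the left-hand side converges by $L^1$-continuity of translations and $1$-Lipschitz continuity of $(\cdot)^+$.

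\emph{Proof of (ii).} Use the classical slicing theorem for $\BV$ functions in direction $u$: for $\cH^{n-1}$-a.e.\ $y_0\in u^\perp$, the one-dimensional slice $A_{y_0}:=\{\tau\in\R:y_0+\tau u\in A\}$ has finite perimeter in $\R$, and the intersection $\cF_{u+}A\cap(y_0+\R u)$ corresponds precisely to the points at which $\1_{A_{y_0}}$ jumps from density $1$ to density $0$. Any 1D set of finite perimeter is, up to a Lebesgue-null set, a finite disjoint union of intervals, so a transition from density $1$ at some $\tau_1$ to density $0$ at $\tau_2>\tau_1$ must cross the right endpoint of one such interval — a jump-down point. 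Applied to $\tau_1\in A_{y_0}$ and $\tau_2=\tau_1+r$, this shows that the 1D slice of the bad set in (ii) is Lebesgue-null; Fubini then completes the proof.

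\emph{Proof of (iii).} The same slicing gives
\begin{equation*}
\lambda_n\bigl(\{x\in A:x+sru\notin A,\,x+ru\in A\}\bigr)=\int_{u^\perp}\lambda_1(T_r(y_0))\,d\cH^{n-1}(y_0),
\end{equation*}
with $T_r(y_0):=\{\tau\in A_{y_0}:\tau+sr\notin A_{y_0},\,\tau+r\in A_{y_0}\}$. For a.e.\ $y_0$, $A_{y_0}$ is a \emph{finite} disjoint union of intervals, separated by finitely many gaps $G_1,\ldots,G_N$. If $\tau\in T_r(y_0)\cap I_k$, then $\tau+sr$ forces $\tau\in [b_k-sr,b_k)$, so each interval contributes at most $sr$ and $\lambda_1(T_r(y_0))\le sr\cdot N_{y_0}$; moreover, the condition $\tau+r\in A_{y_0}$ rules out contributions from $I_k$ unless $|G_k|\le r$, so $\lambda_1(T_r(y_0))=0$ once $r<\min_k|G_k|$. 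Hence $\lambda_1(T_r(y_0))/r\to 0$ pointwise, with uniform bound by $V_{y_0}$. Since $\int V_{y_0}\,d\cH^{n-1}(y_0)=V_u(\1_A)\le V(\1_A)<\infty$, dominated convergence delivers $\lambda_n(\cdot)=o(r)$.

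The main obstacle is (iii): a direct application of (i) (or its analogue to $1-\1_A$) to $\{x:x+sru\notin A,\,x+ru\in A\}$ yields only an $O(r)$ bound. Extracting the factor $o(1)$ requires the slice-wise structural observation that two boundary crossings within a window of length $r$ become impossible as $r$ shrinks below the minimum gap size on a.e.\ 1D slice, combined with dominated convergence over $y_0$.
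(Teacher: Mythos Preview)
Your approach is correct and, for parts (ii) and (iii), takes a genuinely different route from the paper. The paper proves (ii) by contradiction from (i): assuming the exceptional set $Z$ has positive measure, it uses \eqref{Du-} to see that $(D_u\1_A)^-$ vanishes on $Z\oplus[0,ru]$, chooses by outer regularity an open $V\supset Z\oplus[0,ru]$ with $(D_u\1_A)^-(V)<r^{-1}\lambda_n(Z)$, and then applies (i) with $U=V\ominus[0,ru]$ to reach $\lambda_n(Z)<\lambda_n(Z)$. For (iii) the paper applies (ii) twice (to $A$ and to its complement) to bound the set in question by $\{x:\#([x,x+ru]\cap\cF A)\ge 2\}$, and then invokes an external rectifiability lemma \cite[Lemma~1]{rata04} for the $o(r)$ estimate. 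Your slicing argument replaces both steps by the elementary structure of one-dimensional finite-perimeter sets together with dominated convergence; this makes (iii) more self-contained, at the price of relying on the slicing theorem with trace matching (e.g.\ \cite[Theorem~3.108]{AFP00}) and an additional coarea-inequality step to pass from the approximate jump set $J_{\1_A}$ to $\cF A$ on almost every line (the identification ``$\cF_{u+}A\cap(y_0+\R u)=\{$jump-down points of $A_{y_0}\}$'' is immediate only for $J_{\1_A}$, and $J_{\1_A}\setminus\cF A$ is merely $\cH^{n-1}$-null).

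There is a small imprecision in your treatment of (i): letting $\varepsilon\downarrow 0$ in $V^{\{-u\}}(f_j,W)\le V^{\{-u\}}(f,W_\varepsilon)$ yields only $(D_uf)^-\bigl(\overline{U\oplus(0,ru)}\bigr)$, not the value on the open set, so the inequality as stated is not quite reached. The paper avoids this by strictly approximating $f$ on $\Omega=U\oplus(0,ru)$ itself (Proposition~\ref{prop:Var}(c)), so that Proposition~\ref{prop:VQ}(c) gives $V^{\{-u\}}(f_j,\Omega)\to V^{\{-u\}}(f,\Omega)$ exactly. Your version is easily repaired by proving the bound first for $U'$ with $\overline{U'}\subset U$ and then exhausting $U$.
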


\begin{proof}  
In fact, (i) is a local and signed version of \cite[Proposition~11]{Gal:10} and we proceed with a similar proof. 
If $f$ belongs to $C^1(U\oplus(0,ru))\cap \BV(U\oplus(0,ru))$ then
$$f(x)-f(x+ru)=\int_0^1r\left(-\frac{\partial f}{\partial u}(x+tru)\right)\, dt\leq\int_0^1r\left(\frac{\partial f}{\partial u}(x+tru)\right)^-\, dt$$
for all $x\in U$, and, applying Fubini's theorem and \eqref{claimA}, we get
$$\int_U(f(x)-f(x-ru))^+\, dx\leq \int_0^1 r \int_U \left(\frac{\partial f}{\partial u}(x+tru)\right)^-\, dx\, dt \leq r V^{\{-u\}}(f,U\oplus(0,ru)).$$
The case $f\in\BV$ can be shown by strict approximation: By Proposition 
\ref{prop:Var}.(c) there is a sequence $f_j$ in $C^\infty(U\oplus(0,ru))\cap \BV(U\oplus(0,ru))$ converging strictly to $f$ on $U\oplus(0,ru)$. Now, (i) holds with $f$ replaced by $f_j$, and taking the limit $j\to\infty$ it also holds for $f$ due to Proposition \ref{prop:VQ}.(c) and since $f_j\stackrel{L_1}{\to}f$ implies $(f_j(\cdot)-f_j(\cdot+ru))^+\stackrel{L^1}{\to}(f(\cdot)-f(\cdot+ru))^+$.

We will show (ii) by contradiction, i.e., assume that $\lambda_n(Z)>0$, where
$$Z:=(A\setminus(A-ru))\setminus (\cF_{u+} A\oplus [0,-ru]).$$
Note that, in particular, $(D_u\1_A)^-(Z\oplus [0,ru])=0$ (cf.\ \eqref{Du-}). Since the measure $(D_u\1_A)^-$ is outer regular, we can find an open set $V\supset Z\oplus[0,ru]$ such that
$(D_u\1_A)^-(V)<r^{-1}\lambda_n(Z)$. Let, further, $U\supset Z$ be an open set such that
$U\oplus[0,ru]\subset V$ (we can set $U=V\ominus[0,ru]$, where $\ominus$ is the Minkowski subtraction, and use \cite[(3.15)]{sch93}). Then, applying (i) with $f=\1_A$, we obtain
\begin{eqnarray*}
\lambda_n(Z)\leq\lambda_n((A\setminus(A-ru))\cap U)
&=&\int_U(\1_A(x)-\1_A(x+ru))^+\, dx\\
&\leq&r V^{\{-u\}}(\1_A,U\oplus[0,ru])\\
&=& r (D_u\1_A)^-(U\oplus [0,-ru])\\ 
&\leq& r (D_u\1_A)^-(V) <\lambda_n(Z),
\end{eqnarray*}
a contradiction completing the proof of (ii).

In order to prove (iii), we apply (ii) and get
$$\lambda_n\left(\left\{ x\in A:\, x+sru\not\in A,\, x+ru\in A\right\}\right)
\leq \lambda_n(\{z:\, \# ([x,x+ru]\cap \cF A)\geq 2\}).$$
The last measure is of order $o(r)$ since $\cF A$ is $\cH^{n-1}$-rectifiable (see, e.g., \cite[Lemma~1]{rata04}), and the proof is finished.
\end{proof}

Let now a set $A\subset\R^n$ of finite perimeter and a finite set $Q=\{u_0=0,u_1,\ldots,u_k\}\subset\R^n$ be given.
To any $x\in\cF A$ we assign the (unique) smallest number $0\leq i(x)\leq k$ for which $\nu_A(x)\cdot u_{i(x)}=\max_j\nu_A(x)\cdot u_j$, and we consider the partition
$$\cF A=\bigcup_i \partial_iA$$
with $\partial_iA:=\{x\in\cF A:\, i(x)=i\}$, $i=0,\ldots,k$.
Note that $\partial_iA\subset\cF_{u_i+}A$, $i=1,\dots,k$, by definition.
Denoting further
$$A_{Q,r}:=\bigcup_{i=0}^k(\partial_iA\oplus [0,ru_i]),$$
we have, using Fubini's theorem and the area formula for the orthogonal projection of $A_i$ onto $u_i^\perp$ (see \cite[Theorem~2.91]{AFP00}),
\begin{eqnarray}  \label{E_decomp}
\lambda_n(A_{Q,r})&\leq&\sum_{i=0}^kr\cH^{n-1}(p_{u_i^\perp}(\partial _iA))|u_i|
=\sum_{i=0}^k r\int_{\partial_iA}|u_i\cdot\nu_A(x)|\, \cH^{n-1}(dx)\nonumber\\
&=&r\int_{\cF A}\max_i(u_i\cdot\nu_A(x))\,\cH^{n-1}(dx)\nonumber\\
&=&rV^{-Q}(\1_A).
\end{eqnarray}

\begin{lemma}  \label{L_inequal}
Let $A\subset\R^n$ have finite perimeter and let $0\in Q\subset\R^n$ be finite. Then we have
$$\lambda_n\Big( ((A\oplus rQ)\setminus A)\setminus A_{Q,r}\Big)=o(r),\quad r\to 0.$$
\end{lemma}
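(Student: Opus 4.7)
\textbf{Plan for Lemma~\ref{L_inequal}.} Set $E_r := ((A\oplus rQ)\setminus A)\setminus A_{Q,r}$; the task is to show $\lambda_n(E_r)=o(r)$ as $r\to 0_+$.

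\emph{Characterizing $E_r$.} For $x\in E_r$ one has $x\notin A$, and there exists $i\in\{1,\ldots,k\}$ with $x-ru_i\in A$. I would apply Lemma~\ref{l-aux}(ii) to the point $x-ru_i\in A$ and direction $u_i$ to conclude that, outside a Lebesgue-null set, the segment $[x-ru_i,x]$ meets $\cF_{u_i+}A$ at some point $z=x-tu_i$, $t\in[0,r]$. Writing $z\in\partial_jA$ for the unique piece of the partition containing $z$, the hypothesis $x\notin A_{Q,r}$ forces $j\neq i$ (else $x\in\partial_iA\oplus[0,ru_i]\subseteq A_{Q,r}$), and the inequality $\nu_A(z)\cdot u_j\geq\nu_A(z)\cdot u_i>0$ gives $j\geq 1$.

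\emph{Key geometric claim.} The heart of the argument is to establish, for a.e.\ $x\in E_r$, the membership $x-ru_j\in A$. Geometrically this is natural: since $z\in\partial_jA$ has its outer normal most aligned with $u_j$, translating $z$ by $-ru_j$ should land inside $A$. Formally, the computation
\begin{align*}
(x-ru_j-z)\cdot\nu_A(z)=tu_i\cdot\nu_A(z)-ru_j\cdot\nu_A(z)\leq(t-r)u_i\cdot\nu_A(z)\leq 0
\end{align*}
places $x-ru_j$ on the interior side of the approximate tangent half-space to $A$ at $z$. Upgrading this pointwise asymptotic relation into a genuine set membership, up to an exceptional set of measure $o(r)$, relies on an Egorov-type selection on the $(n-1)$-rectifiable set $\partial_jA$ that extracts a large subset on which the approximate tangent half-space property holds uniformly at scale $r$, combined with an area/Fubini-formula estimate in the spirit of \eqref{E_decomp}.

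\emph{Conclusion via Lemma~\ref{l-aux}(iii).} Having secured $x-ru_i\in A$, $x-ru_j\in A$ and $x\notin A$ for a.e.\ $x\in E_r$, I set $y:=x-ru_i\in A$ and $u:=u_i-u_j\neq 0$; then $y+ru=x-ru_j\in A$. Repeating the crossing analysis of Step~A along the segment $[x-ru_j,x]$ produces a crossing $z'\in\partial_{j'}A$ with $j'\neq j$, and the resulting geometric configuration forces the segment $[y,y+ru]$ to meet $\cF A$ at least twice. The proof of Lemma~\ref{l-aux}(iii), which bounds the measure of such ``double-crossing'' configurations through the $\cH^{n-1}$-rectifiability of $\cF A$, then yields $\lambda_n(E_r)=o(r)$.

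\emph{Main obstacle.} The crucial difficulty is the key claim in the second paragraph: the approximate tangent half-space property of $A$ at $\cH^{n-1}$-a.e.\ $z\in\cF A$ is only a pointwise asymptotic statement whose effective scale depends on $z$. Converting it into a uniform estimate at the fixed scale $r$, valid on a large enough portion of $\partial_jA$, requires a delicate selection/covering argument, so that the exceptional set $\{x\in E_r:x-ru_j\notin A\}$ contributes only $o(r)$ rather than the a priori $O(r)$ bound produced by \eqref{E_decomp}.
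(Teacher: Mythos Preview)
Your characterisation of $E_r$ and the first crossing argument match the paper, but the two remaining steps both contain genuine gaps.

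\emph{The conclusion step does not work.} Suppose you had succeeded in showing $x-ru_i\in A$, $x-ru_j\in A$ and $x\notin A$. You then set $y=x-ru_i$, $u=u_i-u_j$, and claim that the segment $[y,y+ru]=[x-ru_i,x-ru_j]$ must meet $\cF A$ at least twice. This is not justified: both endpoints lie in $A$, so the segment may well be entirely contained in $A$ and miss $\cF A$ altogether. The crossings $z$ and $z'$ you produced live on the \emph{other two} edges $[x-ru_i,x]$ and $[x-ru_j,x]$ of the triangle, not on $[y,y+ru]$. Lemma~\ref{l-aux}(iii) (and the double-crossing estimate behind it) therefore cannot be invoked, and the reduction breaks down.

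\emph{The key claim is also not under control.} Your inequality $(x-ru_j-z)\cdot\nu_A(z)\le 0$ only places $x-ru_j$ on the closed inner side of the approximate tangent half-space at $z$; the De Giorgi blow-up is an $L^1$ statement and does not yield pointwise membership $x-ru_j\in A$ for this particular point, even after an Egorov selection. You would need a slicing argument to pass from the $n$-dimensional blow-up to a one-dimensional statement along the relevant line, and this is exactly where the paper takes a different route. The paper never proves $x-ru_j\in A$; instead it shows that for $\cH^{n-1}$-a.e.\ boundary point $x\in F_{ij}=\partial_jA\cap\cF_{u_i+}A$ one has $r^{-1}\lambda_1\bigl(V_{ij}^r\cap[x,x+ru_i]\bigr)\to 0$, where $V_{ij}^r$ is the set of points whose $u_i$-segment hits $F_{ij}$ but whose $u_j$-segment does not. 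This is proved via the one-dimensional approximate tangent line of the rectifiable slice $F_{ij}\cap(x+\Span(u_i,u_j))$ at $x$, followed by dominated convergence over $F_{ij}$; no set-membership claim and no double-crossing argument enter.
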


\begin{proof}
First, we shall show that it is sufficient to consider sets $Q=\{0,u_1,\ldots,u_k\}$ such that for all $1\leq i<j\leq k$, the vectors $u_i,u_j$ are either linearly independent, or linearly dependent, but pointing in opposite directions. To see this, consider a larger set $Q'=Q\cup\{su_k\}$ with some $0<s<1$. We have clearly $A_{Q',r}=A_{Q,r}$, $r>0$, and
\begin{eqnarray*}
\lambda_n((A\oplus rQ')\setminus(A\oplus rQ))
&\leq&\lambda_n ((A+sru_k)\setminus(A\oplus\{0,ru_k\}))\\
&\leq&\lambda_n(\{z:\, z\not\in A,\, z+sru_k\in A,\, z+ru_k\not\in A\}),
\end{eqnarray*}
and the last expression is of order $o(r)$ by Lemma~\ref{l-aux}.(iii) applied to the complement of $A$.

Any point $z\in ((A\oplus rQ)\setminus A)\setminus A_{Q,r}$ has the following properties: $z\not\in A$, $z-ru_i\in A$ for some $1\leq i\leq k$ and $[z-ru_j,z]\cap\partial_jA=\emptyset$ for all $1\leq j\leq k$. By Lemma~\ref{l-aux}.(ii), $\lambda_n$-almost all such points $z$ have the additional property that there exists a point $x\in [z-ru_i,z]\cap\cF_{u_i+}A$ and, clearly, this $x$ must belong to $\partial_jA$ for some $j\neq i$, $j\geq 1$. Hence,
$$\lambda_n\Big( ((A\oplus rQ)\setminus A)\setminus A_{Q,r}\Big)\leq\sum_{j\neq i}\lambda_n(V_{ij}^r)$$
with
$$V_{ij}^r:=\{z:\, [z-ru_i,z]\cap F_{ij}\neq\emptyset,\,[z-ru_j,z]\cap F_{ij}=\emptyset \},$$
where
$$F_{ij}:=\partial_jA\cap\cF_{u_i+}A.$$
It is thus enough to show that $\lambda_n(V_{ij}^r)=o(r)$ for any $1\leq i\neq j\leq k$.

Note that if $u_j=-su_i$ for some $s>0$ then $F_{ij}=\emptyset$ (indeed, in this case $u_i\cdot\nu_A(x)>0$ implies $u_j\cdot\nu_A(x)<0<u_i\cdot\nu_A(x)$). Thus, we can assume in the sequel that $u_i,u_j$ are linearly independent.

Applying the Fubini's theorem and the generalized area formula \cite[\S3.2.22]{Fed69} with the orthogonal projection $p_{u_i^\perp}|F_{ij}$ (note that $F_{ij}\subset\cF A$ is countably $(n-1)$-rectifiable and its Jacobian $J_{n-1}(p_{u_i^\perp}|F_{ij})$ is at most $1$), we get
\begin{eqnarray*}
\lambda_n(V_{ij}^r)&=&\lambda_n(V_{ij}^r\cap(F_{ij}\oplus[0,ru_i]))\\
&=&\int_{u_i^\perp}\lambda_1\left(V_{ij}^r\cap(F_{ij}\oplus[0,ru_i])\cap (y+\Span (u_i))\right)\, \lambda_{n-1}(dy)\\
&\leq&\int_{u_i^\perp}\sum_{x\in F_{ij}\cap(y+\Span (u_i))}\lambda_1(V_{ij}^r\cap[x,ru_i])\, \lambda_{n-1}(dy)\\
&=&\int_{F_{ij}}J_{n-1}(p_{u_i^\perp}|F_{ij})(x)\, \lambda_1(V_{ij}^r\cap[x,x+ru_i])\, \cH^{n-1}(dx)\\
&\leq&\int_{F_{ij}}\lambda_1(V_{ij}^r\cap[x,x+ru_i])\, \cH^{n-1}(dx).
\end{eqnarray*}
Hence we have
$$r^{-1}\lambda_n(V_{ij}^r)\leq\int_{F_{ij}}\varphi^r(x)\, \cH^{n-1}(dx),$$
where
$$\varphi^r(x):=r^{-1}\lambda_1(V_{ij}^r\cap[x,x+ru_i]),\quad x\in F_{ij}.$$
We will show that
\begin{equation}  \label{Eq_lim}
\lim_{r\to 0}\varphi^r=0\quad\cH^{n-1}-\text{a.e. on }F_{ij}.
\end{equation}
Applying then the Lebesgue dominated convergence theorem (note that $|\varphi^r(x)|\leq|u_i|$ for any $x$) we obtain $\lambda_n(V_{ij}^r)=o(r)$, proving the lemma.

We will verify \eqref{Eq_lim}. Since $F_{ij}$ is countably $(n-1)$-rectifiable, the approximate tangent cone $\Tan^{n-1}(F_{ij},x)$ is a hyperplane at $\cH^{n-1}$-a.a. $x\in F_{ij}$ by \cite[\S3.2.19]{Fed69}, and we thus get $\Tan^{n-1}(F_{ij},x)=\nu_A(x)^\perp$ at $\cH^{n-1}$-a.a. $x\in F_{ij}$ by \cite[Theorem~3.59]{AFP00}. (Concerning rectifiability, we use the terminology from \cite{AFP00} which is slightly different from \cite{Fed69}.)

Denote $L:=\Span(u_i,u_j)$. We apply the generalized co-area formula \cite[\S3.2.22]{Fed69} to the mapping $f:=p_{L^\perp}|F_{ij}:F_{ij}\to L^\perp$. We get that $f^{-1}\{z\}=F_{ij}\cap(z+L)$ is countably $1$-rectifiable for $\cH^{n-2}$-a.a.\ $z\in L^\perp$ and, thus, for $\cH^1$-a.a.\ $x\in F_{ij}\cap(z+L)=F_{ij}\cap(x+L)$, the one-dimensional Lebesgue density $\Theta^1(F_{ij}\cap(x+L),x)=1$ (cf.\ \cite[\S3.2.19]{Fed69}) and
\begin{align} \label{E_Fij}
\Tan^1(F_{ij}\cap(x+L),x)=\nu_A(x)^\perp\cap L.
\end{align}
Let $N$ denote the set of all $x\in F_{ij}$ for which \eqref{E_Fij} is not true. We have $\cH^1(N\cap f^{-1}\{z\})=0$ for $\cH^{n-2}$-a.a.\ $z\in L^\perp$, hence, again by the co-area formula,
$$\int_N J_{n-2}f(x)\, \cH^{n-1}(dx)=\int_{L^\perp}\cH^1(N\cap f^{-1}\{z\})\, \cH^{n-2}(dz)=0.$$
As $J_{n-2}f(x)\neq 0$ for $x\in F_{ij}$ (recall that both $\nu_A(x)\cdot u_i$ and $\nu_A(x)\cdot u_j$ are positive if $x\in F_{ij}$), we have $\cH^{n-1}(N)=0$, hence, \eqref{E_Fij} is true for $\cH^{n-1}$-a.a.\ $x\in F_{ij}$.

Fix now a point $x\in F_{ij}$ for which \eqref{E_Fij} holds, set
\begin{align*}
q:=&\frac{\nu_A(x)\cdot u_i}{\nu_A(x)\cdot u_j}\in (0,1],\\
w:=&u_i-qu_j\in \nu_A(x)^\perp\cap L,
\end{align*}
and choose an $\ep>0$.
Note that small positive multiples of the vector $w$ lie in the open triangle
$$C:=\{tu_i-su_j:\, 0<\frac{s}{q+\ep}<t<1\}$$
and, consequently, also
$$\Theta^1(F_{ij}\cap(x+rC),x)=\tfrac 12$$
for any $r>0$. If $\pi$ denotes the projection from $x+L$ onto $x+\Span(u_i)$ along $u_j$, we get as a consequence that
$$\Theta^1(\pi(F_{ij}\cap(x+rC)),x)=\tfrac 12.$$
On the other hand, if $z=x+tu_i\in V_{ij}^r$ for some $0<t<\frac{r}{q+\ep}$ then $z\not\in\pi(F_{ij}\cap(x+rC))$ and, consequently,
\begin{eqnarray*}
\lambda_1(V_{ij}^r\cap[x,x+ru_i])&\leq&
\left(r-\frac{r}{q+\ep}\right)^++\lambda_1\left([x,x+ru_i]\setminus\pi(F_{ij}\cap(x+rC))\right)\\
&\leq& \ep r+o(r).
\end{eqnarray*}
Since $\ep>0$ can be arbitrarily small, we obtain \eqref{Eq_lim} and the proof is finished.
\end{proof}

\begin{corollary}\label{cor:limsup}
Let $A\subset\R^n$ have finite perimeter and let $\emptyset\ne Q\subset \R^n$ be finite. Then
$$\limsup_{r\to 0_+}\frac 1rG(rQ,\1_A)\leq V^{-Q}(\1_A).$$
\end{corollary}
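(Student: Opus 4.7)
The plan is to combine the two preceding results directly. First, since both $G(rQ,\1_A)$ and $V^{-Q}(\1_A)$ are unchanged when one replaces $Q$ by $Q\cup\{0\}$ (for $G$ this is immediate from the definition \eqref{eqDefG}, and for $V^{-Q}$ it follows from $h(-Q\cup\{0\},\cdot)=h(-Q,\cdot)^+$), we may assume without loss of generality that $0\in Q$. Write $Q=\{u_0=0,u_1,\ldots,u_k\}$ and recall the decomposition $\cF A=\bigcup_{i=0}^k \partial_i A$ and the set $A_{Q,r}=\bigcup_{i=0}^k(\partial_i A\oplus[0,ru_i])$ introduced before Lemma~\ref{L_inequal}.

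Next, apply Lemma~\ref{L_inequal} to obtain
\[
\lambda_n((A\oplus rQ)\setminus A)\le \lambda_n(A_{Q,r})+\lambda_n\bigl(((A\oplus rQ)\setminus A)\setminus A_{Q,r}\bigr)=\lambda_n(A_{Q,r})+o(r).
\]
For the first term on the right, I invoke the inequality \eqref{E_decomp} derived before Lemma~\ref{L_inequal}, which states
\[
\lambda_n(A_{Q,r})\le r\int_{\cF A}\max_{0\le i\le k}(u_i\cdot\nu_A(x))\,\cH^{n-1}(dx).
\]
Because $0\in Q$ the maximum inside the integral equals $h(Q,\nu_A(x))^+=h(-Q,-\nu_A(x))^+$, and since $-\nu_A=\Delta_{\1_A}$ holds $|D\1_A|$-a.e.\ on $\cF A$ and $|D\1_A|=\cH^{n-1}\restrict\cF A$, the right hand side equals exactly $rV^{-Q}(\1_A)$.

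Combining these estimates with the geometric identity \eqref{eq:Ggeometric} gives
\[
\frac{1}{r}G(rQ,\1_A)=\frac{1}{r}\lambda_n((A\oplus rQ)\setminus A)\le V^{-Q}(\1_A)+\frac{o(r)}{r},
\]
and letting $r\to 0+$ yields the claimed inequality. There is no genuine obstacle here; all the work has already been done in establishing \eqref{E_decomp} and Lemma~\ref{L_inequal}, and the corollary is essentially the combination of these two ingredients.
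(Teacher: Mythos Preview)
Your proof is correct and follows exactly the same approach as the paper: reduce to the case $0\in Q$, then combine the volume bound \eqref{E_decomp} for $A_{Q,r}$ with the $o(r)$ estimate from Lemma~\ref{L_inequal}. You have simply spelled out in more detail what the paper states in two lines, including the identification of the last integral in \eqref{E_decomp} with $rV^{-Q}(\1_A)$ (which the paper records as the final equality in \eqref{E_decomp} itself).
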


\begin{proof}{
	As both sides of the stated equality remain unchanged when $Q$ is replaced by $Q\cup \{0\}$ we may assume that $0\in Q$. The claim then follows} from \eqref{E_decomp} and Lemma~\ref{L_inequal}.
\end{proof}

Proposition~\ref{PG} and Corollary~\ref{cor:limsup} yield already our main result:

\begin{proposition}\label{prop:finiteQ}
	Assume that  $A\subset\R^n$ has finite perimeter. If
 $\emptyset\ne Q\subset\R^n$ is finite then 
$$\lim_{r\to 0+} r^{-1} G(rQ,\1_A)=V^{-Q}(\1_A).$$
\end{proposition}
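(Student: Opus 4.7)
The plan is to pinch the limit from above and below using the two preceding results. Since $Q$ is finite it is in particular at most countable, so Proposition~\ref{PG} applies with $f=\1_A$ and yields the lower bound
$$\liminf_{r\to 0+} \frac{1}{r} G(rQ, \1_A) \ge V^{-Q}(\1_A),$$
provided $\1_A \in \BV$. For the matching upper bound I would quote Corollary~\ref{cor:limsup} directly,
$$\limsup_{r\to 0+} \frac{1}{r} G(rQ, \1_A) \le V^{-Q}(\1_A),$$
which holds for any set $A$ of finite perimeter and any non-empty finite $Q$. Chaining these two inequalities traps both $\liminf$ and $\limsup$ at the common value $V^{-Q}(\1_A)$, so the ordinary limit exists and coincides with this quantity.

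The only subtlety is a slight mismatch in hypotheses: Proposition~\ref{PG} asks for $f\in\BV$, which forces $\1_A\in L^1$ and hence $\lambda_n(A)<\infty$, whereas Corollary~\ref{cor:limsup} needs only that $A$ have finite perimeter. If $\lambda_n(A)<\infty$ there is nothing further to verify. In the remaining case one may reduce to the finite-volume setting by truncation: for large $R$ the set $A\cap B(0,R)$ has finite volume and finite perimeter, and both $G(rQ,\1_{A\cap B(0,R)})$ and $V^{-Q}(\1_{A\cap B(0,R)})$ approximate the corresponding quantities for $A$ as $R\to\infty$, the error living on $\partial B(0,R)\cap A$ and being negligible because $|D\1_A|$ is a finite measure supported on $\partial^*A$.

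I expect no genuine obstacle: the substantive work has already been carried out in Proposition~\ref{PG} (for the $\liminf$ bound, via mollification and an analysis of the one-sided derivative of $g_x(r)=\max_{u\in Q}f(x-ru)-f(x)$ at the origin) and in Lemma~\ref{L_inequal} feeding Corollary~\ref{cor:limsup} (for the $\limsup$ bound, through the partition of $\cF A$ according to which vertex $u_i$ maximises $\nu_A(x)\cdot u_i$ together with the $o(r)$ estimate on the excess mass outside $A_{Q,r}$). What remains is the routine combination of these two one-sided bounds, plus the small bookkeeping just described to handle $\lambda_n(A)=\infty$ when necessary.
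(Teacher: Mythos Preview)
Your core approach—sandwich the limit between the $\liminf$ bound of Proposition~\ref{PG} and the $\limsup$ bound of Corollary~\ref{cor:limsup}—is exactly the paper's proof, which consists of the single sentence ``Proposition~\ref{PG} and Corollary~\ref{cor:limsup} yield already our main result.''

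You go beyond the paper in flagging that Proposition~\ref{PG} is stated for $f\in\BV$ and therefore, strictly read, needs $\lambda_n(A)<\infty$; the paper does not comment on this. That observation is fair, but your proposed truncation fix is not correctly argued. The discrepancy between $A$ and $A\cap B(0,R)$ does \emph{not} live on $\partial^*A$ and is not controlled by the finiteness of $|D\1_A|$: the essential boundary of $A\cap B(0,R)$ picks up the piece $A^1\cap\partial B(0,R)$, whose $\cH^{n-1}$-measure is of order $R^{n-1}$. Hence $V^{-Q}(\1_{A\cap B(0,R)})$ need not converge to $V^{-Q}(\1_A)$, and the analogous comparison for $G(rQ,\cdot)$ suffers from an error of the same size. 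So neither side ``approximates the corresponding quantities for $A$'' as you claim.

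A cleaner way to close the gap is to note that the proofs behind Proposition~\ref{PG} do not genuinely use $f\in L^1$: the $C^1$ step only needs that $\nabla f\in L^1$ (so that $V^{-Q}(f)=\int h(-Q,\nabla f)^+\,dx$ is finite), and the passage to general $f$ via mollification only needs $f_j\to f$ in $L^1_{\loc}$ together with uniformly bounded variation, which is what Proposition~\ref{prop:VQ}(b) (through \cite[Proposition~3.13]{AFP00}) actually consumes. For $f=\1_A$ with $A$ of finite perimeter these ingredients are available regardless of $\lambda_n(A)$, so the $\liminf$ bound extends directly without truncation.
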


\begin{proof}[Proof of Theorem~\ref{thm:main}]
The first statement, \eqref{eq:main1}, follows directly from Proposition~\ref{prop:finiteQ} in combination with \eqref{eq:Ggeometric} and \eqref{mixedvol}.  If  $0\in Q$ and $\lambda_n(A)<\infty$ then \eqref{eq:main2} holds as it then coincides
with  \eqref{eq:main1}. It is thus enough to
show that the two sides of \eqref{eq:main2} do not change, 
when $Q$ is replaced by a translation $Q-x$ with $x\in Q$. 
This is trivial for the left hand side and follows, using
\eqref{centerCond}, also for the right hand side.  
Hence \eqref{eq:main2} also 
holds without the additional restriction  $0\in Q$. 
\end{proof}

\section{An application: Contact distributions of stationary random
  sets}
  \label{s:appl}

In this section we apply the geometric results to random sets; 
see the book \cite{SW:08} for details on random closed sets, and \cite{Gal:LachRey:15} for random measurable sets in
$\R^n$. 
Galerne and Lachi\`eze-Rey \cite{Gal:10,Gal:LachRey:15}  define the mean covariogram of a random measurable set
and discusses its properties. With the results of the previous section,
 similar relations for the \emph{mean generalized dilation volume} with a finite structuring element could be
 established. We will not do so here, but instead present an 
approximation  of the contact distribution function of a random  set at
zero, as the contact distribution function is an important summary statistics in applications. 
\medskip

We recall the notion of a {\it random measurable set} in $\R^n$. Let $\cM$ denote the space of all Lebesgue measurable subsets of $\R^n$ modulo set differences of Lebesgue measure zero, equiped with the topology of $L^1_{\loc}$ convergence of the indicator functions. If $\cB(\cM)$ denotes the corresponding Borel $\sigma$-algebra, $(\cM,\cB(\cM))$ is a standard Borel space, and a random measurable set (RAMS) is a measurable mapping
$$Z:(\Omega,\Sigma,\Pr)\to (\cM,\cB(\cM))$$
from a probability space $\Omega$.
(As remarked in \cite[Remark~1]{Gal:LachRey:15}, the random sets of finite perimeter from \cite{Rataj:15} are just random measurable sets with finite specific perimeter.)
We restrict attention to {\it stationary} random measurable sets $Z$ in $\R^n$
(that is, random measurable sets with translation-invariant  distribution). 

If $Z$ is a stationary random {\it closed} set with volume fraction $\overline{p}=\Pr[0\in Z]<1$, its \emph{contact distribution function} (sometimes called hit distribution function) with a compact structuring element $Q\subset\R^n$ is defined by 
\begin{align}  \label{def_H_Q}
  H_Q(r)=\Pr(Z\cap rQ\ne \emptyset\mid 0\not\in Z),\quad r\geq 0. 
\end{align}
If $\overline p=1$, we set  $H_Q(r)=1$. For convex $Q$ with $0\in
Q$ and $\overline p<1$, $H_Q(\cdot)$ coincides with the  function
\[
  \tilde H_Q(r)=\Pr(d_Q(Z)\le r\mid 0\not\in Z),
\]
where $d_Q(Z)=\min\{t\ge 0: Z\cap tQ\ne \emptyset\}$. In general  we have 
\[
 \tilde H_Q(r)=H_{\Star Q}(r),
\]
where $\Star Q=\bigcup_{y\in Q}[0,y]$ is the star-hull of $Q$
with respect to $0$. 

Notice that \eqref{def_H_Q} does not give sense if $Z$ is a stationary RAMS since $[0\in Z]$ or $[Z\cap rQ\neq\emptyset]$ are not events (measurable subsets of $\Omega$) any more. (Indeed, one cannot determine whether $0$ belogs to $Z(\omega)$ since $Z(\omega)$ is given only up to measure zero.) Nevertheless, under stationarity, and for finite $Q$, we can give a meaning to \eqref{def_H_Q} as follows. We consider the \emph{shift randomization} $\widetilde{Z}$ of $Z$ defined on the larger probability space $\widetilde{\Omega}:=\Omega\times[0,1]^n$ with $\widetilde{\Pr}:=\Pr\otimes(\lambda_n|_{[0,1]^n})$ and $\widetilde{\Sigma}$ being the completion of the product $\sigma$-algebra $\Sigma\otimes \cB(\cR^n)$ as follows:
$$\widetilde{Z}(\omega,x):=Z(\omega)-x,\quad (\omega,x)\in\widetilde{\Omega}.$$ 
By stationarity, we get the equality in distribution, $\widetilde{Z}\stackrel{d}{=}Z$. In Lemma~\ref{L_rams} below, we show that $[0\in\widetilde{Z}]$ and $[\widetilde{Z}\cap rQ\neq\emptyset]$ are  random events, and we can define the volume fraction of $Z$ as $\overline{p}:=\widetilde{\Pr}[0\in\widetilde{Z}]$ and the contact distribution function $H_Q(r)$ of $Z$ using \eqref{def_H_Q}, where $\widetilde{\Pr},\widetilde{Z}$ are used instead of $\Pr,Z$. This 
contact distribution function satisfies 
\[
H_Q(r)=1-\frac{1-\bbE\lambda_n\left( (Z\oplus (-rQ\cup \{0\}))\cap [0,1]^n\right)}{1-\bbE\lambda_n(Z\cap [0,1]^n)},
\]
$r\ge 0$, 
which is a known representation of $H_Q$ when $Z$ is a RACS; cf.~\cite[p.~44]{SW:08}.

\begin{lemma}  \label{L_rams}
Let $Z$ be a stationary RAMS in $\R^n$ and $\widetilde{Z}$ its shift randomization. Then $[x\in\widetilde{Z}]$ is a random event (i.e., a measurable subset of $\widetilde{\Omega}$) for any $x\in \R^n$. If $Q\subset\R^n$ is at most countable then $[\widetilde{Z}\cap Q\neq\emptyset]$ is also a random event.
\end{lemma}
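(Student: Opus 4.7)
The plan is to produce a jointly measurable pointwise representative of the random indicator of $Z$ and then obtain both measurability claims by standard set-theoretic operations.

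Step 1 (the crucial one). I would first construct a jointly $\Sigma \otimes \cB(\R^n)$-measurable function $F: \Omega \times \R^n \to [0,1]$ whose $\omega$-sections are, for $\Pr$-a.e.\ $\omega$, representatives of $\1_{Z(\omega)}$. For $r>0$ set
\[
  g_r(\omega, x) := \frac{\lambda_n\bigl(Z(\omega) \cap B(x,r)\bigr)}{\lambda_n(B(x,r))}.
\]
The numerator depends only on the equivalence class $Z(\omega)\in\cM$; since $L^1_{\loc}$-convergence makes $A \mapsto \lambda_n(A \cap B(x,r))$ continuous on $\cM$ (as $B(x,r)$ is bounded), $\omega \mapsto g_r(\omega,x)$ is $\Sigma$-measurable for each fixed $x$. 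For fixed $\omega$, $x \mapsto g_r(\omega,x)$ is continuous by the $L^1$-continuity of translations. Hence $g_r$ is Carath\'eodory and thus jointly $\Sigma \otimes \cB(\R^n)$-measurable. By the Lebesgue differentiation theorem applied $\omega$-wise, $g_r(\omega,x) \to \1_{Z(\omega)}(x)$ as $r \downarrow 0$ for $\lambda_n$-a.e.\ $x$, so
\[
  F(\omega,x) := \limsup_{r \downarrow 0,\, r \in \Q} g_r(\omega,x)
\]
is jointly measurable and equals $\1_{Z(\omega)}$ (in some representative) for $(\Pr \otimes \lambda_n)$-a.e.\ $(\omega,x)$.

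Step 2. For each $x \in \R^n$ I would then define the event
\[
  [x \in \widetilde{Z}] := \{(\omega,y) \in \widetilde{\Omega} : F(\omega, x+y) > 1/2\}.
\]
Being the preimage of a Borel set under the composition of the jointly measurable $F$ with the continuous shift $(\omega,y)\mapsto(\omega,x+y)$, this set lies in $\Sigma \otimes \cB([0,1]^n)$, hence in $\widetilde{\Sigma}$. For $Q$ at most countable,
\[
  [\widetilde{Z} \cap Q \neq \emptyset] \;=\; \bigcup_{q \in Q}[q \in \widetilde{Z}]
\]
is then a countable union of $\widetilde{\Sigma}$-measurable sets and is therefore itself measurable.

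The main obstacle is Step 1. Since each $Z(\omega)$ is only an equivalence class modulo $\lambda_n$-null sets, pointwise membership is a priori meaningless, so the joint measurability of a pointwise representative must be established rather than assumed. The Lebesgue differentiation theorem supplies a canonical representative almost everywhere, and joint measurability is obtained by exhibiting the local averages as a Carath\'eodory function of $(\omega,x)$, using precisely the continuity of $A \mapsto \lambda_n(A \cap K)$ on $\cM$ for bounded $K$ that is built into the $L^1_{\loc}$ topology on $\cM$.
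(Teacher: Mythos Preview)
Your proof is correct and follows the same strategy as the paper's: obtain a jointly $\Sigma\otimes\cB(\R^n)$-measurable representative of the indicator of $Z$ and read off the events from it, then handle countable $Q$ by a countable union. The only difference is that the paper quotes the existence of such a measurable graph representative from \cite[Proposition~1]{Gal:LachRey:15} and then appeals to the completeness of $\widetilde\Sigma$, whereas you construct the representative explicitly via local averages and the Lebesgue differentiation theorem---in effect re-proving the cited proposition---and thereby place the events directly in the product $\sigma$-algebra without invoking the completion.
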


\begin{proof}
According to \cite[Proposition~1]{Gal:LachRey:15}, $Z$ admits a measurable graph representative, i.e., a subset $Y\subset\Omega\times\R^n$ measurable w.r.t.\ $\Sigma\otimes\cB(\R^n)$ such that for a.a.\ $\omega\in\Omega$, $\lambda_n(Z(\omega)\Delta Y_\omega)=0$, where $Y_\omega:=\{x\in\R^n:\, (\omega,x)\in Y\}$. Then we have by Fubini's theorem
\begin{align*}
\widetilde{\Pr}\left([0\in\widetilde{Z}]\Delta \left(Y\cap(\Omega\times[0,1]^n)\right)\right)
=\int_{\Omega}\lambda_n\left( (Z(\omega)\Delta Y_\omega)\cap [0,1]^n\right)\, \Pr(d\omega)=0.
\end{align*}
Since $Y$ is product-measurable and $\widetilde{\Sigma}$ is complete, also $[0\in\widetilde{Z}]$ is in $\widetilde{\Sigma}$. When $x\in \R^n$ is given, 
$Z-x$ is a RAMS, and thus $[x\in \tilde Z]=[0\in \tilde Z-x]=[0\in \widetilde{Z-x}]$ is measurable. The second assertion now follows from this and the fact that 
\[
[\tilde Z\cap Q=\emptyset]=\bigcap_{u\in Q}[u\not \in \tilde Z],
\]
and  the proof is finished.
\end{proof}

Let $Z$ be a stationary RAMS.
If $Z$ has a.s.~locally finite perimeter (i.e. $P(Z,\Omega)<\infty$ almost surely
for all bounded open sets $\Omega$), 
its derivative, the random
$\R^n$-valued Radon measure  $D\1_Z$ exists, and inherits stationarity
from $Z$. Hence, $|D\1_Z|$ is a stationary nonnegative Radon measure, and
there is  $\ova(Z)\in [0,\infty]$ such that
$\bbE|D\1_Z|=\ova(Z)\lambda_n$. The constant  $\ova(Z)$ is called the
\emph{specific perimeter} of $Z$ (see \cite{Gal:10,Rataj:15}) and we extend it by $\ova(Z):=\infty$
to those $Z$ which do not almost surely have locally bounded
variation. 
By definition, for any open  $\Omega\subset \R^n$ 
the random variable $P(Z,\Omega)$ is an unbiased
estimator of $\ova(Z)\lambda_n(\Omega)$. 
The specific perimeter  can also be obtained as usual  by an
averaging process over increasing windows. 

\begin{lemma}\label{lem:W}
Let $Z$ be a stationary RAMS. Then 
\begin{align}\label{eqW}
  \ova(Z)=\lim_{r\to \infty}\frac{\bbE P(Z\cap rW)}{\lambda_n(rW)},
\end{align}
where $W\subset \R^n$ is a compact convex set with positive volume. 
\end{lemma}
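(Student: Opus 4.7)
The plan is to sandwich $P(Z\cap rW)$ between $V(\1_Z,\INt(rW))$ and $V(\1_Z,\INt(rW))+\cH^{n-1}(\partial(rW))$, take expectations using the defining relation $\bbE|D\1_Z|=\ova(Z)\lambda_n$, and divide by $\lambda_n(rW)=r^n\lambda_n(W)$. The boundary correction $\cH^{n-1}(\partial(rW))=r^{n-1}\cH^{n-1}(\partial W)$ is of order $r^{n-1}$, so it disappears after dividing by $r^n$ and the limit is $\ova(Z)$.

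The key geometric observation is that on the open set $\INt(rW)$ the functions $\1_Z$ and $\1_{Z\cap rW}$ coincide pointwise, so their distributional derivatives agree on test functions supported there, giving $V(\1_{Z\cap rW},\INt(rW))=V(\1_Z,\INt(rW))$ and hence the lower bound $V(\1_Z,\INt(rW))\le P(Z\cap rW)$. For the matching upper bound when $Z$ has locally finite perimeter almost surely, I would decompose $\R^n=\INt(rW)\cup\partial(rW)\cup (rW)^c$: the variation measure $|D\1_{Z\cap rW}|$ vanishes on the open set $(rW)^c$ (its indicator is zero there), equals $|D\1_Z|$ on $\INt(rW)$ by the preceding remark, and on $\partial(rW)$ is at most $\cH^{n-1}(\partial(rW))$ by \eqref{obsolete}. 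Taking expectations and using $\lambda_n(\partial(rW))=0$ yields
\[
\ova(Z)\lambda_n(rW)\le \bbE P(Z\cap rW)\le \ova(Z)\lambda_n(rW)+r^{n-1}\cH^{n-1}(\partial W),
\]
from which \eqref{eqW} follows when $\ova(Z)<\infty$.

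When $\ova(Z)=\infty$ only the lower bound matters. If $Z$ almost surely has locally finite perimeter, translation invariance of $\bbE|D\1_Z|$ forces it to be a (possibly infinite) constant multiple of $\lambda_n$, so $\bbE V(\1_Z,\INt(rW))=\infty$ for every $r>0$. Otherwise there is a bounded open set $\Omega_0$ with $\Pr[V(\1_Z,\Omega_0)=\infty]>0$; by stationarity this probability bound holds for every translate of $\Omega_0$, and since $W$ has positive inradius $\INt(rW)$ eventually contains such a translate, so $\bbE V(\1_Z,\INt(rW))=\infty$ for all large $r$. In either case $\bbE P(Z\cap rW)/\lambda_n(rW)\to\infty=\ova(Z)$. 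The main obstacle is the upper-bound step, namely verifying $|D\1_{Z\cap rW}|(\partial(rW))\le\cH^{n-1}(\partial(rW))$, which rests on \eqref{obsolete} together with the fact that $rW$ is a compact convex body; a secondary technical point is measurability of $V(\1_Z,\Omega)$ in $Z$, which follows from the separability of $C_c^\infty(\Omega,\R^n)$ in the supremum used to define the variation.
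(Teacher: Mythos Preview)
Your proof is correct and follows essentially the same route as the paper's. The paper phrases the sandwich via the essential boundary, using the set-theoretic inclusions $(\partial^*Z)\cap\INt(rW)\subset\partial^*(Z\cap rW)\subset[(\partial^*Z)\cap\INt(rW)]\cup r\partial W$ and then applying $\cH^{n-1}$, whereas you work directly with the variation measure $|D\1_{Z\cap rW}|$ decomposed over $\INt(rW)$, $\partial(rW)$, and $(rW)^c$; by \eqref{obsolete} these are the same inequality, and the treatment of the case $\ova(Z)=\infty$ is also parallel.
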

\begin{proof}
Due to stationarity, we may assume $0\in \text{int}W$. 
For $\Omega=r(\text{int}W)$, we have 
\[
  (\partial^*Z)\cap \Omega\subset  \partial^*(Z\cap rW)\subset 
[(\partial^*Z)\cap \Omega]\cup r \partial W. 
\]
Applying the $(n-1)$st Hausdorff-measure, and taking expectations, yields
\begin{align}\label{eq:lll} 
\bbE \cH^{n-1}(\partial^*Z\cap \Omega)\le
 \bbE P(Z\cap rW)\le
\bbE \cH^{n-1}(\partial^*Z\cap \Omega)
+r^{n-1}\cH^{n-1}(\partial W). 
\end{align}
If $Z$ has a.s.~locally finite perimeter, a 
comparison with the definition of $\ova(Z)$ yields  \eqref{eqW}.
Otherwise, there is  some open bounded set $\tilde\Omega$ such that 
$\cH^{n-1}(\partial^*Z\cap\tilde\Omega)=\infty$  
with positive probability. Then the
expectation on the left hand side of \eqref{eq:lll} is infinite for
all sufficiently large $r$, and the limit in \eqref{eqW} equals
infinity, as required. 
\end{proof}

If $Z$ is a stationary RAMS with $\ova(Z)<\infty$,  
then, for almost all realizations of $Z$, the 
generalized inner normal $\Delta_{\1_Z}(z)$ is defined for
$\cH^{n-1}$-almost all $z\in \partial^* Z$. 
Consider the random measure on $\R^n\times S^{n-1}$ given by 
\[
\Psi(B\times U)=\cH^{n-1}\{ z\in\partial^*Z\cap B:\, -\Delta_{\1_Z}(z)\in U\},\quad B\times U\in{\mathcal B}(\R^n\times S^{n-1});
\]
cf.~\cite[Proposition 4.2]{Rataj:15}. Since $\Psi$ is stationary in the first component and with finite intensity, its intensity measure can be disintegrated as
$$\bbE\Psi(B\times U)=\ova(Z)\lambda_n(B){\mathcal R}^*(U)$$
with a Borel probability measure ${\mathcal R}^*$ on $S^{n-1}$. If $\ova(Z)>0$ then $\cR^*$ is uniquely determined and it is called {\it oriented rose of directions} of $Z$ (cf.\ \cite{Rataj:15}). 
Note that this notion is in general different
from the usual \emph{oriented rose of directions} $\cR$, which is
defined under regularity conditions on $Z$ such that there is an outer
normal at $\cH^{n-1}$-almost all points in $\partial Z$. 
Both notions coincide if $\cH^{n-1}(\partial Z\setminus \partial^*
Z)=0$, for instance when $Z$ is a topologically regular
element of the extended convex ring, like in the case of a Boolean
model $Z$ of full-dimensional convex particles. 

We are now ready to prove our second main result.

\begin{proof}[Proof of Theorem~\ref{thm:contact}] 
If $\overline p=1$ then $Z=\R^n$ almost surely,
$\ova(Z)=0$, and \eqref{eq:contactDistStat} holds. For
$\overline p<1$  observe that 
\begin{align*}
(1-\overline p)H'_Q(0+)
 =\lim_{t\to \infty}(t^{n}\kappa_n)^{-1}  \lim_{r\to 0+}r^{-1}, 
\bbE\lambda_n(M_{r,t})
\end{align*}
with the set $M_{r,t}=[(Z\oplus(-rQ))\setminus Z]\cap B(0,t)$. 
We may assume $Q\subset B(0,1)$, and abbreviate   
$Z_s=Z\cap B(0,s)$, $s\ge 0$. For $t>1$,  $r\in (0,1)$ and $R_{r,t}$ being the annulus $B(0,t-1+r)\setminus B(0,t-1)$, we have  
\[
[Z_{t-1}\oplus  (-rQ)]\setminus Z_{t-1}\subset M_{r,t}\cup R_{r,t}
\]
and 
\[
M_{r,t}\subset [Z_{t+1}\oplus
(-rQ)]\setminus Z_{t+1}
\]
Due to 
\[
\lim_{t\to \infty}(t^{n}\kappa_n)^{-1}  \lim_{r\to 0+}r^{-1} 
\lambda_n(R_{r,t})=0, 
\] 
$\lim_{t\to \infty} t^n/(t\pm1)^n=1$, and $\lambda_n\left([Z_{t}\oplus  (-rQ)]\setminus Z_{t}\right)= G(-rQ,\1_{Z_{t}})$
we have
\begin{align}\label{LL}
(1-\overline p)H'_Q(0+)=
\lim_{t\to \infty}(t^{n}\kappa_n)^{-1}  \lim_{r\to 0+}r^{-1} 
\bbE G(-rQ,\1_{Z_{t}}). 
\end{align}
Assume that  $\ova(Z)<\infty$. 
Then \eqref{eqDefG}, Lemma \ref{l-aux}.(i) and Lemma \ref{lem:VqV}.(c) imply
\[
r^{-1}G(-rQ,\1_{Z_{t}})\le \sum_{0\not=u\in Q} V^{\{u\}}(\1_{Z_{t}})\le  (\# Q)  V(\1_{Z_{t}}),
\] which  gives the uniformly integrable
upper bound $(\# Q) P(Z\cap B(0,t))$. This allows us to use Lebesgue's dominated convergence
theorem for the limit $r\to 0+$ {when $t$ is fixed}. 
Hence,  Proposition \ref{prop:finiteQ} gives 
\begin{align}\label{ineq:rLimit}
\lim_{r\to 0+}r^{-1} 
\bbE G(-rQ,\1_{Z_{t}}) =\bbE V^Q(\1_{Z_{t}}).
\end{align}
As 
\[
(\partial^*Z)\cap \INt B(0,t)\subset \partial^*Z_t\subset 
[(\partial^*Z)\cap \INt B(0,t)]\cup tS^{n-1},
\]
$\lim_{t\to \infty}(t^{n}\kappa_n)^{-1}\cH^{n-1}(tS^{n-1})=0$,  $0\le h(-Q,\cdot)^+\le1$, the definition of $V^Q(\cdot)$ and \eqref{obsolete} 
yield
\begin{align*}\lim_{t\to \infty}&\,(t^{n} \kappa_n)^{-1} \bbE
V^Q(\1_{Z_{t}})\\&= 
\lim_{t\to \infty}(t^{n} \kappa_n)^{-1} \bbE
\int_{(\partial^*Z)\cap \INt B(0,t)} h(Q,\Delta_{\1_{Z_t}}(x))^+ \,\cH^{n-1}(dx).
\end{align*}
As $\Delta_{1_Z}$ is locally  defined according to \cite[p. 154]{AFP00}, we have $\Delta_{\1_{Z_t}}(x)=\Delta_{\1_{Z}}(x)$ for $\cH^{n-1}$-almost every $x\in (\partial^*Z)\cap \INt B(0,t)$, so 
\begin{align*}\lim_{t\to \infty}\,(t^{n} \kappa_n)^{-1} \bbE
V^Q(\1_{Z_{t}})
&=\lim_{t\to \infty}(t^{n} \kappa_n)^{-1} \bbE   
\int_{\INt B(0,t)\times S^{n-1}} h(-Q,v)^+ \,\Psi(d(x,v))
\\&=\ova(Z)\int_{S^{n-1}} h(-Q,v)^+{\cR}^*(dv). 
\end{align*}
The combination of this with \eqref{ineq:rLimit} and \eqref{LL} completes the proof in the case  $\ova(Z)<\infty$.  

Consider the  case where  $\ova(Z)=\infty$.
Approximating 
$\1_{Z_{t}}$ by mollifications $f_j\in C^1_c$ with non-negative
$\rho$, inequality \eqref{GG},  Proposition  \ref{PG} and Lemma \ref{lem:VqV}.(b) give  
\begin{align*}
\liminf_{r\to 0+}r^{-1} G(-rQ,\1_{Z_{t}})\ge 
\liminf_{r\to 0+}r^{-1} G(-rQ,f_j)\ge  V^{Q}(f_j)\ge     s V(f_j),  
\end{align*}
where $s>0$ is the inradius of $\conv(Q\cup\{0\})$; note that the latter set has interior points by assumption.  Proposition \ref{propDirVar}.(c) now implies  
\begin{align*}\liminf_{r\to 0+}
r^{-1} G(-rQ,\1_{Z_{t}})\ge  s V(\1_{Z_{t}}), 
\end{align*}
and insertion into  \eqref{LL} and using Lebesgue's dominated convergence theorem gives 
\begin{align}\label{eq:new}
(1-\overline p)H_Q'(0+)\ge\lim_{t\to\infty} s\frac{\bbE P(Z\cap B(0,t))}{t^n\kappa_n}=s\overline P(Z)=\infty, 
\end{align}
due to Lemma \ref{lem:W}.  \smallskip

Now let $Z$ be isotropic. If $\ova(Z)=0$, the claim is trivial. If  $0<\ova(Z)<\infty$, the measure ${\cR}^*$ is the uniform distribution on $S^{n-1}$ and the definition of the mean width gives the required
relation. If $\ova(Z)=\infty$, equation \eqref{eq:isotrop} holds for
$Q=\{0\}$, so we may assume that there is an $u_0\in  S^{n-1}$ and a number  $s>0$ such that $su_0\in Q$. Then \eqref{LL}, $G(-rQ,\cdot)\ge s G(r\{-u_0\},\cdot)$, Proposition \ref{prop:finiteQ} and Lemma \ref{prop:VQ}.(a) yield 
\begin{align*}\label{LL}
(1-\overline p)H'_Q(0+)\ge \frac s2\lim_{t\to\infty}(t^{n}\kappa_n)^{-1}  
\bbE V_{u_0}(\1_{Z_{t}}). 
\end{align*}
As $Z$ is isotropic, $\bbE V_{u_0}(\1_{Z_{t}})=\bbE
V_{u}(\1_{Z_{t}})$ for all $u\in S^{n-1}$, and
\eqref{eq:rotatMittel} gives 
\[
(1-\overline p)H'_Q(0+)\ge \lim_{t\to\infty}
s(2n\kappa_n)^{-1}\int_{S^{n-1}} 
\frac{\bbE
	V_{u}(\1_{Z_{t-1}})}{t^n\kappa_n}\cH^{n-1}(du)=
\frac{s\kappa_{n-1}}{n\kappa_n} \ova(Z)=\infty.
\]
Thus, assertion  \eqref{eq:isotrop} is shown and the proof is complete. 
\end{proof}

Note that the only  assumption on the random set $Z$ 
in Theorem \ref{thm:contact} is stationarity. 
The use of the bounded variation concept allows us
to avoid any kind of integrability condition, which is usually present
in similar results. For instance, \eqref{eq:contactDistStat} 
was shown in \cite{KR06} for ``gentle''
random sets and compact $Q$. A variant of \eqref{eq:contactDistStat} for
non-stationary $Z$, where 
$H_{Q}(\cdot)$ also depends on the position of (the compact, convex
set)  $Q$ and on the
outer normal of the contact
point,  was shown in  \cite{HL:00} 
for  a grain model with compact convex grains. A related result is
given in \cite[Theorem 4.1]{Vil:10}, where 
the derivative of the spherical contact distribution function 
of certain non-stationary  Boolean models $Z$  is
determined for $0\le r\le R$, where $R$ is the reach of the typical
grain of $Z$. Under appropriate assumptions, even the (right sided)
second derivative at zero is given there. 
All three named papers rely on the (local) finiteness of certain measures
associated to $Z$. 
The price to pay for the generality of Theorem \ref{thm:contact} 
are the severe restrictions on
the structuring element $Q$. However, 
\eqref{eq:contactDistStat} cannot hold for general compact $Q$, 
as the example of a stationary hyperplane process together with 
$Q=B(0,1)$ shows.

\end{document}